\newcommand{\R}{\mathbb R}
\newcommand{\hil}{\mathcal{H}}
\newcommand{\la}{\lambda}
\theoremstyle{plain}
\newtheorem{thm}{Theorem}[section]
\newtheorem{prop}{Proposition}[section]
\newtheorem{lem}{Lemma}[section]
\newtheorem{thmp}{Theorem}[section]
\theoremstyle{definition}
\newtheorem{defn}{Definition}[section]
\theoremstyle{remark}
\newtheorem{rem}{\textit{Remark}}[section]
\numberwithin{equation}{section}
\let\c@equation\c@thm
\newcommand\underrel[3][]{\mathrel{\mathop{#3}\laaimits_{%
			\ifx c#1\relax\mathclap{#2}\else#2\fi}}}
\title[extended Schr\"{o}dinger- Benjamin-Ono system]{Well-posedness for the extended Schr\"{o}dinger-Benjamin-Ono system}
\author{F. Linares }
\address{Instituto de Matem\'{a}tica Pura e Aplicada-IMPA, Estrada Dona Castorina, 110
	Jardim Bot\^{a}nico,
	Rio de Janeiro, RJ - Brazil}
\email{linares@impa.br}
\author{ A.J.  Mendez }
\address{Pontificia Universidad Cat\'{o}lica de Valpara\'{\i}so, Blanco Viel 596, Cerro Bar\'{o}n, Valpara\'{\i}so, Chile}
\email{argenis.mendez@pucv.cl}
\author{D. Pilod}
\address{Department of Mathematics, University of Bergen, Postbox 7800, 5020 Bergen, Norway}
\email{Didier.Pilod@uib.no}
\subjclass{Primary: 35Q53. Secondary: 35Q05}
\keywords{Schr\"odinger Equation. Benjamin-Ono Equation. Smoothing effects}	
\date{\today}
\begin{document}

\maketitle

\begin{center} 
\small{\emph{Dedicated to Carlos E. Kenig for his $70^{th}$ birthday, \\ 
with friendship and admiration}}.
\end{center}
	
\begin{abstract}
In this work we prove that  the initial value problem  associated to the Schr\"{o}dinger-Benjamin-Ono type system 
\begin{equation*}
\left\{ \begin{array}{ll}
	\mathrm{i}\partial_{t}u+ \partial_{x}^{2} u=  uv+ \beta u|u|^{2}, \\
	\partial_{t}v-\mathcal{H}_{x}\partial_{x}^{2}v+ \rho v\partial_{x}v=\partial_{x}\left(|u|^{2}\right)\\
	u(x,0)=u_{0}(x), \quad v(x,0)=v_{0}(x),
	\end{array} 
\right.
\end{equation*}
with $\beta,\rho \in \mathbb{R}$ is locally well-posed for initial data $(u_{0},v_{0})\in H^{s+\frac12}(\mathbb{R})\times H^{s}(\mathbb{R})$ for $s>\frac54$. 

Our method of proof relies on energy methods and compactness arguments. However, due to the lack of symmetry of the nonlinearity, the usual energy has to be modified to cancel out some bad terms appearing in the estimates. Finally, in order to lower the regularity below the Sobolev threshold $s=\frac32$, we employ a refined Strichartz estimate introduced in the Benjamin-Ono setting by Koch and Tzvetkov, and further developed by Kenig and Koenig. 
\end{abstract}

% \tableofcontents

%%%%%%%%%%%%%%%%%%%%%%%%%%%%%%%%%%%%%%%%%%%%%%	
\section{Introduction}
%%%%%%%%%%%%%%%%%%%%%%%%%%%%%%%%%%%%%%%%%%%%%%

\subsection{The Schr\"odinger-Benjamin-Ono equation}
We are interested in the study of the initial value problem (IVP) associated to the following system of nonlinear dispersive equation, i.e.
\begin{equation}\label{sbo}
\begin{cases}
\mathrm{i}\partial_t u+\partial_x^2 u =  uv+\beta|u|^2u,\hskip40pt x\in\R,\;t>0,\\
\;\partial_tv-\hil \partial_x^2 v+ \rho v\partial_xv=\partial_x(|u|^2),\\
u(x, 0)= u_0(x), \hskip10pt v(x,0)=v_0(x),
\end{cases}
\end{equation}
where $u=u(x,t)$ is a complex valued function, $v=v(x,t)$ is a real valued function, the parameters $\beta, \rho\in\R$, and $\hil$ denotes the Hilbert transform, defined on the line as
\begin{equation}\label{hilbert}
\hil f(x)= {\rm p.v.} \,\frac{1}{\pi}\int \frac{f(y)}{x-y}\, dy.
\end{equation}
When $\rho \neq 0$, we will refer to the system in \eqref{sbo} as the \emph{extended Schr\"odinger-Benjamin-Ono system}.

\medskip

The system \eqref{sbo} appears as a particular case (under appropriate transformations) of the more general system describing the interaction phenomenon between long waves and short waves under a weakly coupled nonlinearity, 
\begin{align}\label{long-short}
\begin{cases}
i\partial_t S+ic_S \partial_x S+\partial_x^2 S=\alpha SL+\gamma\vert S\vert^2S, & c_s,\alpha,\gamma\in\R,
\\ \partial_t L+c_L \partial_x L+\nu P(D_x)L+\lambda \partial_x L^2=\beta \partial_x \vert S\vert^2, & c_L,\nu,\lambda,\beta\in\R,
\end{cases}
\end{align}
where $S=S(x,t)$ is a complex-valued function representing the short wave, $L=L(x,t)$ is a real-valued function representing the long wave and $P(D_x)$ is a differential operator with constant coefficients. This system has received considerable attention because of the vast variety of physical situations in which it arises. (See \cite{BeBu}, \cite{DjRe},
\cite{FuOi}, \cite{Gr}, \cite{Ma}, \cite{NHMI}, \cite{SaYa} ).

%For instance, the internal gravity-wave packet and the capillary-gravity interaction wave when (see \cite{Gr,DjRe,KaSuKa}) \[
%\beta <0,\quad c_S=c_L=\gamma=\lambda=\nu=0.
%\]
%Furthermore, when $\gamma=0$ the previous system has been derived as a model for the resonant ion-sound/Langmuir wave interaction in plasma physics under the assumption that the ion-sound wave is unidirectional (see \cite{Ma,NiHoMiIk}). Moreover, setting $\nu=0$ and $\lambda\in\{0,1\}$ this system appears in the general theory of water wave interaction in a nonlinear medium. 

Of particular interest is a model for the motion of two fluids under capillary-gravity waves in a deep water flow that was derived in \cite{FuOi} and it corresponds to
\vspace{-2mm}
\begin{equation*}
P(D_x)=D_x^1\partial_x=\hil\partial_x^2, \quad \nu\neq 0, \quad c_s=c_L=\gamma=\lambda=0, \quad \alpha,\,\beta>0,
\end{equation*}
in \eqref{long-short}, that is,
\begin{equation}\label{deep-water}
\begin{cases}
i\partial_tS+\partial_x^2S =  \alpha SL,\\
\;\partial_t L+\nu\hil \partial_x^2 L=\beta \partial_x(|S|^2).
\end{cases}
\end{equation}
which is referred to in the literature as the \emph{Schr\"odinger-Benjamin-Ono system}. 

\medskip 

We also shall comment that smooth solutions of the system in \eqref{sbo} satisfy the 
following conserved quantities:
\begin{equation}
\mathcal{E}_{1}(t):=	 \int_{\mathbb{R}}v(x,t)\,dx=\mathcal{E}_{1}(0),
 \end{equation}
\begin{equation}\label{conserv1}
\mathcal{E}_{2}(t):=	 \int_{\mathbb{R}}|u(x,t)|^{2}\,dx=\mathcal{E}_{2}(0),
\end{equation}
\begin{equation}\label{conserv2}
\mathcal{E}_{3}(t):=	\mathrm{Im}\int_{\mathbb{R}}u(x,t)\overline{\partial_{x}u(x,t)}\,dx+\frac{1}{2}\int_{\mathbb{R}}v^{2}(x,t)\,dx
=\mathcal{E}_{3}(0),
\end{equation}
and
\begin{equation}
\begin{split}
\mathcal{E}_{4}(t)\!\!&:=\frac{1}{2}\int_{\mathbb{R}}(D_{x}^{\frac{1}{2}}v(x,t))^{2}dx -\frac{\rho}{6}\int_{\mathbb{R}}v^{3}(x,t)dx+\int_{\mathbb{R}}v(x,t)|u(x,t)|^{2}dx\\
&\quad+\frac{\beta}{2}\int_{\mathbb{R}} |u|^{4}(x,t)\,dx+\int_{\mathbb{R}}|\partial_{x}u(x,t)|^{2}\,dx=\mathcal{E}_{4}(0),
\end{split}
\end{equation}
where the $D^{\frac12}_x$ operator is defined in the notation at the end of the introduction.
As far as we know this system is not completely integrable in contrast with the cubic nonlinear Schr\"odinger
and the Benjamin-Ono equations which are coupled in system \eqref{sbo}.

\medskip

\subsection{Well-posedness results}

Our main goal in this paper is to establish a first result regarding local well-posedness for  the IVP \eqref{sbo}.  To our knowledge  there is no such theory in this case when $\rho \neq 0$.  Before describing our results we shall comment what has been done for the system \eqref{deep-water}, corresponding to \eqref{sbo} with $\rho=\beta=0$, to motivate our analysis.

Regarding well-posedness for the IVP associated to system \eqref{deep-water}, for initial data $(S(0), L(0))\in H^{s}(\R)\times H^{s-\frac12}(\R)$, local well-posedness was proved for $s\ge 0$ in the case $|\nu|\neq 1$ in \cite{bop1}, and then for $s>0$ in the case the case $|\nu| = 1$  in \cite{P}. Finally, we refer to \cite{Dom} for sharp local well-posedness results in $H^s(\mathbb R) \times H^{s'}(\mathbb R)$  in the non-resonant case $|\nu|\neq 1$ and under some restrictions on the parameters $(s,s')$, which can be decoupled from $(s,s-\frac12)$. To obtain the aforementioned results, the restriction Fourier method introduced in \cite{Bo} and a fixed point argument were used. 
%Observe that the dispersion of the second equation was not employed but the smoothing given by the structure of the nonlinear term. 
We shall remark that solutions of the system \eqref{long-short} satisfy conserved quantities (see \cite{bop1} for instance) that allow to extend the local solutions
globally in the energy space under some restrictions on the parameters $\nu, \alpha$, and $\beta$ (see \cite{bop1} and \cite{P}).  In  \cite{amp}, the global theory was extended to $H^{s}(\R)\times H^{s-\frac12}(\R)$ for $s\ge 0$ and any $\nu \neq 0$. System \eqref{deep-water} was also considered in the periodic setting (see \cite{amp} for the well-posedness theory and \cite{oh} for the construction of invariant measures).

Notice that the weak nonlinear interaction $L\partial_xL$ is missing in \eqref{deep-water}. Nowadays it is known that the Benjamin-Ono (BO) equation
\begin{equation}\label{BO}
\partial_t v-\hil \partial_x^2 v+v\partial_xv=0
\end{equation}
is quasilinear in the sense that none well-posedness for the IVP associated to the BO equation \eqref{BO} in $H^s(\R)$
for any $s\in\R$ can be established by an argument based only the contraction  principle argument. This important result
was proved in \cite{MST} and the argument could be used with little modifications to show that the same is true for the system in \eqref{sbo}. Thus we shall employ compactness methods in order to establish local well-posedness for the IVP \eqref{sbo}. 

\medskip

Our main result here is as follows:

\begin{thmp}\label{teo1} 
Let $s>\frac54$.  For  any $(u_{0},v_{0})\in H^{s+\frac12}(\mathbb{R})\times H^{s}(\mathbb{R})$, there exist a positive time $T=T( \|(u_{0},v_{0})\|_{H^{s+\frac{1}{2}}_{x}\times H^{s}_{x}})$, which can be chosen as a non-increasing function of its argument, and  a unique solution  $(u,v)$ of the IVP \eqref{sbo} satisfying
\begin{equation}\label{clas1}
(u,v)\in C\big([0,T]: H^{s+\frac12}_{x}(\mathbb{R})\times H^{s}_{x}(\mathbb{R})\big) 
\end{equation}
and
\begin{equation}\label{clas2}
\partial_{x}v \in L^1\big( (0,T): L^{\infty}_{x}(\mathbb{R})\big).
\end{equation}
		
Moreover, for any $0<T'<T$, there exists a neighborhood $\mathcal{U}$ of $(u_0,v_0)$ in $H^{s+\frac12}(\mathbb R) \times H^s(\mathbb R)$ such that the flow map data-to-solution 
\begin{equation*}
S: \mathcal{U} \rightarrow C\big([0,T]: H^{s+\frac12}_{x}(\mathbb{R})\times H^{s}_{x}(\mathbb{R})\big) \, , (\tilde{u}_0,\tilde{v}_0) \mapsto (\tilde{u},\tilde{v})
\end{equation*}
is continous. 
\end{thmp}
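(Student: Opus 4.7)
The plan is a standard three-step quasilinear scheme: (i) uniform a priori estimates in $H^{s+1/2}\times H^s$ for smooth solutions, (ii) existence by regularization and compactness, (iii) uniqueness and continuous dependence. First I would regularize, e.g. by adding parabolic dissipation $-\epsilon \partial_x^2$ to both equations (or truncating frequencies via Littlewood--Paley projectors), obtaining a family of smooth solutions $(u_\epsilon, v_\epsilon)$. The goal is then to control these uniformly in $\epsilon$ on a time interval $T$ depending only on $\|(u_0,v_0)\|_{H^{s+1/2}\times H^s}$.

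Differentiating the naive energy $E_0(t)=\|u\|_{H^{s+1/2}}^2+\|v\|_{H^s}^2$ in $t$, one meets three kinds of problematic terms: the Kato--Ponce commutator from the quasilinear $\rho v\partial_x v$ (classical, absorbed via $\|\partial_x v\|_{L^\infty}$); the high-low term $\mathrm{Im}\int u\,(D^{s+1/2}v)(D^{s+1/2}\bar u)\,dx$ coming from $D^{s+1/2}(uv)$, which apparently requires $s+1/2$ derivatives of $v$ when only $s$ are available; and the companion term $\mathrm{Re}\int (D^s v)\,u\,(D^{s+1}\bar u)\,dx$ coming from $D^{s+1}(|u|^2)$ in the BO equation, which apparently requires $s+1$ derivatives of $u$ when only $s+1/2$ are available. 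These last two constitute the ``bad terms'' mentioned in the abstract and reflect the asymmetry of the coupling together with the mismatched regularity levels. To cancel them I would introduce a modified energy
\begin{equation*}
\mathcal{E}(t) = \|u\|_{H^{s+1/2}}^2 + \|v\|_{H^s}^2 + \mathrm{Re}\int \Phi\bigl(u,\bar u, v, D^{s+1/2}u, D^{s}v\bigr)\,dx,
\end{equation*}
where the cubic correction $\Phi$ is designed so that, when $\partial_t u$ and $\partial_t v$ are substituted from the equations, $\frac{d}{dt}\int\Phi$ produces exactly the opposites of the two bad terms, modulo contributions controllable by $\|\partial_x v\|_{L^\infty}$, $\|u\|_{W^{1,\infty}}$, and standard commutator/fractional Leibniz (Kato--Ponce, Coifman--Meyer) estimates. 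The correction is a perturbation of $E_0$, so $\mathcal{E}\sim E_0$ for small times, and Gronwall then closes the estimate provided $\|\partial_x v\|_{L^1_T L^\infty_x}$ is finite.

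For $s>3/2$ the bound $\|\partial_x v\|_{L^1_T L^\infty_x}\lesssim T\|v\|_{L^\infty_T H^s}$ follows from Sobolev embedding. To push below this barrier, I would invoke the refined Strichartz strategy introduced by Koch--Tzvetkov and further developed by Kenig--Koenig for the BO equation: decompose $[0,T]$ into sub-intervals of length $\sim 2^{-k}$ adapted to dyadic frequencies $N\sim 2^k$, apply on each small interval the short-time linear Strichartz estimate for the BO group to the corresponding Littlewood--Paley block of $v$, treat the forcing $\partial_x(|u|^2)$ by Duhamel and Minkowski, and sum the dyadic pieces. The outcome is an inequality of the form
\begin{equation*}
\|\partial_x v\|_{L^1_T L^\infty_x}\lesssim T^{\theta}\bigl(1+\|(u,v)\|_{L^\infty_T(H^{s+1/2}\times H^s)}\bigr)^N,
\end{equation*}
valid for $s>5/4$ and some $\theta>0$, $N\in\mathbb{N}$, which is precisely the range of the theorem.

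Once uniform bounds are in hand, existence follows by passing to the limit $\epsilon\to 0$ using Aubin--Lions compactness, with the weak-$*$ limit shown to satisfy \eqref{sbo} in the sense of distributions. Uniqueness is obtained by an energy estimate at low regularity (e.g. $L^2\times H^{-1/2}$) on the difference of two solutions, using $\partial_x v\in L^1_T L^\infty_x$ to absorb the transport-type losses. Finally, persistence in $C([0,T];H^{s+1/2}\times H^s)$ and continuous dependence of the flow on the data follow from the Bona--Smith regularization argument applied to the modified energy. The main obstacle I anticipate is the explicit construction of the correction $\Phi$: the different symbol orders of the Schr\"odinger and Benjamin--Ono dispersions force $\Phi$ to involve fractional derivatives in a carefully balanced way, and every commutator or remainder generated in $\frac{d}{dt}\mathcal{E}$ must be controllable by the sole quantities at our disposal, namely $\|(u,v)\|_{H^{s+1/2}\times H^s}$ together with $\|\partial_x v\|_{L^1_T L^\infty_x}$.
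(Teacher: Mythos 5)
Your strategy coincides with the paper's: a modified energy to cancel the two asymmetric bad terms, the Koch--Tzvetkov/Kenig--Koenig refined Strichartz estimate to bound $\|\partial_x v\|_{L^1_TL^\infty_x}$ for $s>\frac54$, regularization/compactness for existence, a low-regularity difference estimate for uniqueness, and Bona--Smith for persistence and continuous dependence. But the two points you leave open are exactly where the work lies. First, the correction $\Phi$ is never constructed, and its construction is the crux of the argument: the paper takes the lower-order term $\int_{\mathbb R} D_x^{s-\frac12}v\, D_x^{s-\frac12}(|u|^2)\,dx$, whose time derivative, after using $\mathcal{H}\partial_x^2v$ in the Benjamin--Ono part and the identity $\partial_x=-\mathcal{H}D_x$, produces precisely the opposites of both bad terms $2\,\mathrm{Im}\int u\,D_x^{s+1}\overline{u}\,D_x^s v\,dx$ and $\int D_x^s v\,\partial_x D_x^s(|u|^2)\,dx$, while every remaining piece is handled by the commutator estimates of Lemma \ref{kpcomm} and estimate \eqref{sharp_gen:calderon}. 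A correction of the same type is needed a second time for the difference system \eqref{sbo:diff}: at the $H^{\frac12}\times L^2$ level (the paper's uniqueness space, rather than your $L^2\times H^{-\frac12}$) one must add $\mathrm{Re}\int J_x^{-1}z\,\overline{u}w\,dx$, and at the $H^{s+\frac12}\times H^s$ level the uncancelled remainder $f_s$ in \eqref{m-energy:diff.3bis} carries more derivatives on $(u_i,v_i)$ than on $(w,z)$ and can only be closed through the Bona--Smith approximation, not by a direct Gronwall argument; your proposal does not acknowledge that the difference energies must be modified as well.

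Second, your coercivity claim ``$\mathcal{E}\sim E_0$ for small times'' is not correct as stated: the correction is cubic, so the equivalence requires smallness of $\|(u,v)\|_{H^{s+\frac12}\times H^s}$, not of $t$, and it can already fail at $t=0$ for large data. Since \eqref{sbo} has no scaling symmetry, this is a genuine issue; the paper resolves it with the change of variables \eqref{sbo_rescaled}, which is not a symmetry but only introduces a harmless factor $\lambda$ in front of $u_\lambda v_\lambda$ and shrinks the data norm as $\lambda\to 0$, and one must then check that the smallness threshold in the energy estimates is uniform in $\lambda\in(0,1]$ (Remarks \ref{rem:en_est} and \ref{rem:en_est_diff}) so that the solution can be scaled back with $T\gtrsim\lambda^2 T_\lambda$. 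Without this step your scheme yields only a small-data theorem. These are the missing ingredients; the rest of your outline (bootstrap on the a priori bound using the blow-up alternative for smooth solutions, frequency-truncated data, Cauchy property by interpolation between the $H^{\frac12}\times L^2$ Lipschitz bound and the uniform $H^{s+\frac12}\times H^s$ bound) matches the paper's execution.
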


Our strategy is to use a refined Strichartz estimates as it was done by Koch and Tzvetkov in \cite{kt}, and by
Kenig and Koenig in \cite{kk} for the Benjamin-Ono equation.
There are some difficulties to overcome in order to implement this method. The first one is related to the loss
of derivatives present in the deduction of the energy estimates in $H^{s+\frac12}(\mathbb{R})\times H^{s}(\R)$ for solutions of the IVP \eqref{sbo}. To get around this obstruction, we modify the energy following the idea of Kwon in \cite{k} for the fifth-order KdV equation.   Adding an extra lower-order term to the energy permits to close the energy estimates (see also \cite{KaPi}). Note that in this case, the modified term added to the energy allows to cancel out two different bad nonlinear interactions in the energy estimates. 
%{\color{red}(Note that this kind of
%energy estimates relies on the structure of the whole equation (and not only of the
%nonlinear part)}.

Since we need the coercivity of the modified energy, this method only works in the case of small initial data. Well-posedness for arbitrary large data is generally obtained by a scaling argument, but the system \eqref{sbo} does not enjoy this special property. Nevertheless, we still can perform the following change of variables 
\begin{equation} \label{sbo_rescaled}
\begin{split}
u_{\lambda}(x,t)&=\lambda u(\lambda x, \lambda^2 t)\\
v_{\lambda}(x,t)&=\lambda v(\lambda x, \lambda^2t)
\end{split}
\end{equation}
for $0<\lambda\le 1$. Then $(u_{\lambda}, v_{\lambda})$ is solution of 
\begin{equation}\label{sbo-scale}
\begin{cases}
\mathrm{i}\partial_tu_{\la}+\partial_x^2u_{\la}=  \lambda u_{\la}v_{\la}+\beta|u_{\la}|^2u_{\la},\\
 \partial_tv_{\la}-\hil\partial_x^2 v_{\la}+ \rho v_{\la}\partial_xv_{\la}= \partial_x(|(u_{\la})|^2),\\
u_{\la}(x,0)=\la u_0(\la x), \hskip10pt v_{\la}(x,0)=\lambda v_0(\lambda x).
\end{cases}
\end{equation}
Since
\begin{equation}\label{sbo-scal1}
\begin{split}
\|u_{\la}(\cdot,0)\|_{H^{s+\frac12}} & \lesssim \la^{\frac12} (1+\la^{s+\frac12})\|u_0\|_{H^{s+\frac12}}\\
\|v_{\la}(\cdot,0)\|_{H^s} & \lesssim \la^{\frac12} (1+\la^{s})\|v_0\|_{H^{s}} ,
\end{split}
\end{equation}
given $\delta>0$, we can always choose $\la$ small enough such that 
\begin{equation}\label{sbo-scal2}
	\|(u_{\la}(\cdot,0), w_{\la}(\cdot,0))\|_{H^{s+\frac12}\times H^s}\le \delta .
\end{equation}
Moreover, suppose that there exists $\delta>0$ such that for all $0<\la\le1$, there exists a solution of the IVP \eqref{sbo-scale} $(u_{\la}, v_{\la})\in C([0,T_{\lambda}]: H^{s+\frac12}(\R)\times H^s(\R))$ whenever 
$\|(u_{\la}(\cdot,0), v_{\la}(\cdot,0))\|_{H^{s+\frac12}\times H^s}\le \delta$. Then we obtain, letting 
$(u(x,t), v(x,t))= (\la^{-1}u_{\la}(\la^{-1}x, \la^{-2}t), \la^{-1}v_{\la}(\la^{-1}x, \la^{-2}t))$, a solution of \eqref{sbo} in the function space $C([0,T]: H^{s+\frac12}(\R)\times H^s(\R))$ with a time of existence satisfying $T\gtrsim \lambda^2T_{\lambda}$. Note that the requirement on the smallness of the solutions appearing in the energy estimates must be independent of $\lambda \in (0,1]$ (see Remarks \ref{rem:en_est} and \ref{rem:en_est_diff} below). This idea was already used by Zaiter for the Ostrovsky equation in \cite{z}.

\medskip

%Another difficulty we found concerns the introduction of the smoothing effect of Kato type for solutions
%of the Schr\"odinger equation part of the system in the framework of \cite{kk}.  As it is well-known solutions
%of the linear problem associated to the Schr\"odinger equation, i.e.
%\begin{equation}\label{linear-schrodinger}
%\begin{cases}
%\mathrm{i}\partial_t u_t+\partial_x^2 u=0,\\
%u(x,0)=u_0(x)
%\end{cases}
%\end{equation}
%satisfy
%\begin{equation}\label{smoothing-schrodinger}
%\|D^{1/2}_x e^{it\partial_x^2}u_0\|_{L^{\infty}_xL^2_t}\le\,c\,\|u_0\|_{L^2}
%\end{equation}
%where $\{e^{it\partial_x^2}\}_{t\in\R}$ denotes the unitary group associated to \eqref{linear-schrodinger} (see \cite{osc-KPV}). The proof of \eqref{smoothing-schrodinger} is based on a  Fourier transform argument. We need 
%in our analysis to have such smoothing effect proved via energy estimates. This is not an easy task as one can see in
%Lemma \ref{apriori} below, where we have to employ the arguments introduced in \cite{KPV-general} and \cite{DOI1}, \cite{DOI2}
%for general Schr\"odinger models. After this estimate is established we will localized it to inject (insert) it in the
%Kenig and Koenig framework.
\medskip

The plan of this paper is as follows: in Section 2, we recall some commutators estimates, which will be used in Section 3 to derive the energy estimates. Finally, Section 4 is devoted to the proof of the main theorem. 

%%%%%%%%%%%%%%%%%%%%%%%%%%%%%%%%%%%%%%%%%%%%%%
\subsection*{Notation}
%%%%%%%%%%%%%%%%%%%%%%%%%%%%%%%%%%%%%%%%%%%%%%
	\begin{itemize}
	\item 	For two quantities $A$ and $B$, we denote $A\lesssim B$ if $A\leq cB$ for some constant $c>0.$ Similarly, $A\gtrsim B$  if $A\geq cB$ for some $c>0.$  Also for two positive quantities, $A$ $B$ we say that are \emph{ comparable} if $A\lesssim B$ and $B\lesssim A,$ when such conditions are   satisfied we  indicate it by  writing $A\sim B.$  The dependence of the constant $c$ on other parameters or constants are usually clear from the context and we will often suppress this dependence whenever it be  possible.
		
	\item	For any pair of quantities  $X$ and $Y,$ we denote  $X\ll Y$ if $X\leq cY$ for some sufficiently small  positive constant $c.$ The smallness of such constant  is usually clear from the context.  The notation $X\gg Y$ is similarly defined.
	
	\item For $s \in \mathbb R$, $D^s_x$ and $J_x^s$ denote respectively the Riesz and Bessel potentials of order $-s$, respectively defined as Fourier multipliers by 
	\[ \mathcal{F}(D^s_xf)(\xi)=|\xi|^{s} \mathcal{F}(f)(\xi) \quad \text{and} \quad  \mathcal{F}(J^s_xf)(\xi)=(1+\xi^2)^{\frac{s}2} \mathcal{F}(f)(\xi). \]
	
	\item For $1 \le p \le \infty$, $L^p(\mathbb R)$ denotes the classical Lebesgue spaces. 
	
	\item For $s \in \mathbb R$, $H^s(\mathbb R)$ denotes the $L^2$-based sobolev space of order $s$, which consists of all distributions $f \in \mathcal{S}'(\mathbb R)$ such that $\|f\|_{H^s(\mathbb R)}=\|J_x^sf\|_{L^2} < 
	\infty$.
\end{itemize}	
%%%%%%%%%%%%%%%%%%%%%%%%%%%%%%%%%%%%%%
\section{Commutator estimates}
%%%%%%%%%%%%%%%%%%%%%%%%%%%%%%%%%%%%%%

The following fractional Leibniz rule was proved by Kenig, Ponce and Vega in the appendix of \cite{KPV-CPAM-93}
\begin{lem}\label{comm}
 Let $\alpha=\alpha_{1}+\alpha_{2}\in(0,1)$ with $\alpha_{1},\alpha_{2}\in (0,\alpha),p\in [1,\infty),$ and  $p_{1},p_{2}\in (1,\infty)$ such that  $\frac{1}{p}=\frac{1}{p_{1}}+\frac{1}{p_{2}}.$ Then
		\begin{equation*}
			\left\|D_x^{\alpha}(fg)-fD_x^{\alpha}g-gD_x^{\alpha}f\right\|_{L^{p}}\lesssim \|D_x^{\alpha_{1}}f\|_{L^{p_{1}}}\|D_x^{\alpha_{2}}g\|_{L^{p_{2}}}.
		\end{equation*}
		Moreover, if $p>1,$ then the case  $\alpha_{2}=0,$ the exponent $p_{2}$  with  $ 1<p_{2}\leq \infty$ is also allowed.
%		\item[(b)] Let $\alpha=\alpha_{1}+\alpha_{2}\in (0,1)$ with $\alpha_{1},\alpha_{2}\in[0,\alpha]$, and let $p,p_{1},p_{2},q,q_{1},q_{2}\in(1,\infty)$ such that  $\frac{1}{p}=\frac{1}{p_{1}}+\frac{1}{p_{2}}$ and $\frac{1}{q}=\frac{1}{q_{1}}+\frac{1}{q_{2}}.$ Then
%		\begin{equation*}
%			\left\|D^{\alpha}_{x}(fg)-fD^{\alpha}_{x}g-gD^{\alpha}_{x}f\right\|_{L^{p}_{x}L^{q}_{t}}\lesssim \|D^{\alpha_{1}}_{x}f\|_{L^{p_{1}}_{x}L^{q_{1}}_{t}}\|D^{\alpha_{2}}_{x}g\|_{L^{p_{2}}_{x}L^{q_{2}}_{t}}.
%		\end{equation*}
%		Moreover, the following additional  cases are allowed: $(\alpha_{1},\alpha_{2})=(0,\infty),$ $\, (p,q)=(1,2);$ and $q=1,$ provided that $\alpha_{1},\alpha_{2}\in (0,\alpha).$ 
%	\end{enumerate}
\end{lem}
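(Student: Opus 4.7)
The plan is to convert the trilinear expression into a symmetric singular integral via the principal value representation of $D_x^\alpha$, dominate it pointwise by a product of Stein--Strichartz square functions, and then invoke Stein's classical equivalence between the square function and the Riesz potential.

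For $\alpha \in (0,1)$ one has the pointwise representation
\[ D_x^\alpha h(x) = c_\alpha \int_\R \frac{h(x)-h(y)}{|x-y|^{1+\alpha}}\, dy, \]
the integral converging absolutely on Schwartz $h$ since the numerator vanishes to first order on the diagonal and $\alpha<1$. Substituting $h = fg$ and using the algebraic identity
\[ (fg)(x)-(fg)(y) - f(x)(g(x)-g(y)) - g(x)(f(x)-f(y)) = -(f(x)-f(y))(g(x)-g(y)), \]
I obtain the symmetric representation
\[ D_x^\alpha(fg)(x) - f(x) D_x^\alpha g(x) - g(x) D_x^\alpha f(x) = -c_\alpha \int_\R \frac{(f(x)-f(y))(g(x)-g(y))}{|x-y|^{1+\alpha}}\,dy. \]

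Since $\alpha = \alpha_1 + \alpha_2$, I split the kernel as $|x-y|^{1+\alpha} = |x-y|^{(1+2\alpha_1)/2}\cdot|x-y|^{(1+2\alpha_2)/2}$ and apply the Cauchy--Schwarz inequality in the $y$-variable, producing the pointwise bound
\[ \big|D_x^\alpha(fg)(x) - f(x) D_x^\alpha g(x) - g(x) D_x^\alpha f(x)\big| \lesssim S_{\alpha_1}(f)(x)\, S_{\alpha_2}(g)(x), \]
where $S_\beta h(x) := \Big(\int_\R |h(x)-h(y)|^2\, |x-y|^{-1-2\beta}\,dy\Big)^{1/2}$ is the Stein--Strichartz square function associated with $D_x^\beta$. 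Hölder's inequality with exponents $(p_1, p_2)$ then yields
\[ \|D_x^\alpha(fg) - f D_x^\alpha g - g D_x^\alpha f\|_{L^p} \lesssim \|S_{\alpha_1}(f)\|_{L^{p_1}}\,\|S_{\alpha_2}(g)\|_{L^{p_2}}, \]
and Stein's classical equivalence $\|S_\beta h\|_{L^q} \sim \|D_x^\beta h\|_{L^q}$, valid for $\beta \in (0,1)$ and $1 < q < \infty$, closes the main case. The hypotheses $\alpha_j \in (0,\alpha) \subset (0,1)$ and $p_j \in (1,\infty)$ (forced by $\frac{1}{p_1}+\frac{1}{p_2}=\frac{1}{p}\le 1$ together with $p_j>1$) are exactly what is needed to invoke this equivalence.

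The endpoint case $\alpha_2 = 0$, $p_2 \leq \infty$ falls outside this scheme because $g$ carries no fractional regularity and Stein's characterization fails at $L^\infty$. For this I would instead use a Littlewood--Paley/Bony paraproduct decomposition $fg = \pi_f g + \pi_g f + R(f,g)$: the mixed-frequency terms $D_x^\alpha \pi_f g - \pi_f D_x^\alpha g$ and its symmetric counterpart are controlled by Coifman--Meyer type bilinear multiplier estimates exploiting the $C^\infty$ smoothness of the symbol $|\xi|^\alpha$ away from the origin, while the high--high resonant remainder $D_x^\alpha R(f,g)$ gains regularity from frequency cancellation. I expect this endpoint regime to be the principal obstacle of the proof, since it is the only portion that cannot be reduced to the elementary Cauchy--Schwarz plus square-function argument used for the interior case.
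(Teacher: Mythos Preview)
The paper does not prove this lemma at all; it simply attributes the result to Kenig, Ponce and Vega (the appendix of \cite{KPV-CPAM-93}) and states it without argument. So there is no ``paper's own proof'' to compare against beyond that citation.

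Your argument for the main case $\alpha_1,\alpha_2\in(0,\alpha)$, $p_1,p_2\in(1,\infty)$ is correct and is in fact precisely the original Kenig--Ponce--Vega proof: the principal-value representation of $D_x^\alpha$ for $\alpha\in(0,1)$, the algebraic identity reducing the double commutator to the symmetric difference quotient $-(f(x)-f(y))(g(x)-g(y))$, Cauchy--Schwarz in $y$, and then the Stein/Strichartz square-function characterization $\|S_\beta h\|_{L^q}\sim\|D_x^\beta h\|_{L^q}$. Nothing is missing there.

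The endpoint $\alpha_2=0$, $p_2\le\infty$ is a genuine gap in your write-up: you correctly identify that the square-function route fails (the integral defining $S_0$ diverges), and you gesture at a paraproduct argument, but you do not actually carry it out. That said, your instinct is right: the endpoint is handled separately in the literature and does require a different mechanism. If you want to complete it without a full Bony decomposition, one clean route is to observe that when $\alpha_2=0$ the estimate reduces to a commutator bound of Kato--Ponce type, $\|[D_x^\alpha,g]f\|_{L^p}\lesssim\|D_x^\alpha f\|_{L^{p_1}}\|g\|_{L^{p_2}}$ plus a harmless term, and then appeal to the Coifman--Meyer multiplier theorem; this is essentially what Li's refinements (cited in the paper as \cite{Dli}) systematize.
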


These results as well as the Kato-Ponce commutator estimates (see \cite{kato-ponce}) were recently extended by Li  in \cite{Dli}. In particular, we will use the following estimates (see Theorem 5.1 and Corollary 5.3 in \cite{Dli}). 

\begin{lem}\label{kpcomm}
	Let   $1<p<\infty$. Let  $1<p_{1},p_{2},p_{3},p_{4}\leq \infty,$  satisfy 
	 \begin{equation*}
		\frac{1}{p_{1}}+\frac{1}{p_{2}}=\frac{1}{p_{3}}+\frac{1}{p_{4}}=\frac{1}{p}.
	\end{equation*}
	Then, for all  $f,g\in \mathcal{S}(\mathbb{R}),$ the following estimates are true. 
	\begin{enumerate}
		\item[\rm(I)] If $0<s\leq 1,$ then 
				\begin{equation*}
				\left\|\left[D_x^{s}; f\right]g\right\|_{L^{p}}\lesssim \|D_x^{s-1}\partial_{x}f\|_{L^{p_{1}}}\|g\|_{L^{p_{2}}}.
			\end{equation*}
	\item[\rm(II)] If $s>1,$ then 
	\begin{equation}\label{kp2}
		\left\|\left[D^{s}_x; f\right
		]g\right\|_{L^{p}}\lesssim \|D_x^{s}f\|_{L^{p_{1}}}\|g\|_{L^{p_{2}}_{x}}+\|\partial_{x}f\|_{L^{p_{3}}}\|D_{x}^{s-1}g\|_{L^{p_{4}}}.
	\end{equation}
	\end{enumerate}
\end{lem}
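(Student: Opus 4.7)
The plan is to prove Lemma \ref{kpcomm} through a paraproduct / Coifman--Meyer symbol analysis, which is the standard framework in which these refined Kato--Ponce-type commutator estimates are established. I would first introduce inhomogeneous Littlewood--Paley projectors $\Delta_j$, $S_j = \sum_{k<j}\Delta_k$, and use Bony's decomposition
\begin{equation*}
fg = T_f g + T_g f + R(f,g), \qquad T_f g := \sum_j S_{j-3}f \,\cdot\, \Delta_j g,
\end{equation*}
with $R(f,g)$ the diagonal/resonant sum. The commutator then splits cleanly as
\begin{equation*}
[D_x^s, f]g = [D_x^s, T_f]g \,+\, \bigl(D_x^s T_g f - T_{D_x^s g}\, f\bigr) \,+\, \bigl(D_x^s R(f,g) - R(f, D_x^s g)\bigr).
\end{equation*}

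Next I would estimate each piece by symbol analysis. For the low--high commutator $[D_x^s, T_f]g$, each dyadic block has symbol $m_j(\xi,\eta) = \bigl(|\xi+\eta|^s - |\eta|^s\bigr)\widehat{\chi}_{j-3}(\xi)\widetilde{\chi}_{j}(\eta)$ with $|\xi|\ll|\eta|$; a first-order Taylor expansion gives $m_j \sim \xi\,\operatorname{sgn}(\eta)|\eta|^{s-1}$, which costs one derivative on $f$ and yields $s-1$ derivatives on $g$. Summing over $j$ via Fefferman--Stein vector-valued maximal inequalities (valid for $1<p<\infty$, and tolerating $p_i=\infty$ on the factor that is realized pointwise) produces the bound $\|\partial_x f\|_{L^{p_3}}\|D_x^{s-1} g\|_{L^{p_4}}$. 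For the high--low piece $D_x^s T_g f - T_{D_x^s g}f$, the derivative $D_x^s$ falls on the high-frequency factor $\Delta_j f$, giving $\|D_x^s f\|_{L^{p_1}}\|g\|_{L^{p_2}}$; the resonant block is handled by Bernstein's inequality, which lets one move $D_x^s$ between the comparable dyadic factors. Together these yield (II) for $s>1$.

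For case (I) with $0<s\le 1$, I would avoid the paraproduct splitting (which naturally produces two terms) and instead use the pointwise singular-integral representation
\begin{equation*}
[D_x^s, f]g(x) = c_s\,\mathrm{p.v.}\!\int \frac{f(x)-f(y)}{|x-y|^{1+s}}\, g(y)\, dy, \qquad 0<s<1,
\end{equation*}
together with the trivial Leibniz identity $[\partial_x,f]g=(\partial_x f)g$ at $s=1$. Splitting $|x-y|$ dyadically and using the identity $D_x^{s-1}\partial_x = -\mathrm i\,\mathcal H D_x^s$, one estimates the kernel by $\mathcal M(D_x^{s-1}\partial_x f)(x)\cdot \mathcal M g(x)$, from which the Hardy--Littlewood maximal theorem delivers (I) for $1<p<\infty$ and $1<p_1,p_2\le\infty$.

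The main obstacle is the endpoint behavior when $p_i=\infty$: the Littlewood--Paley square-function characterization of $L^p$ fails at $L^\infty$, so the summation in Step 2 has to be justified either via BMO-type substitutes, vector-valued Calder\'on--Zygmund theory applied to the Coifman--Meyer kernel, or the refined bilinear maximal-function framework developed by Li, which is designed precisely to accommodate these endpoints. A secondary difficulty is justifying the representation in case (I) at the borderline values $s=1$ and $p_1=\infty$, which requires an approximation argument from Schwartz data and a careful principal-value analysis.
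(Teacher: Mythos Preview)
The paper does not prove Lemma~\ref{kpcomm} at all: it is stated as a quotation of Theorem~5.1 and Corollary~5.3 in Li~\cite{Dli}, with no argument given. So there is no ``paper's own proof'' to compare against; the authors treat this purely as a black-box input.

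That said, your sketch is a faithful outline of how such estimates are actually established in \cite{Dli} and related works. The Bony paraproduct splitting, the Taylor expansion of the symbol on the low--high block to extract one derivative on $f$, and the identification of the $p_i=\infty$ endpoints as the genuine obstacle are all correct. Two small points worth tightening if you flesh this out: (i) in case~(I) the pointwise kernel bound by $\mathcal{M}(D_x^{s-1}\partial_x f)\cdot \mathcal{M}g$ is not quite immediate---one typically controls the difference quotient via Hedberg's inequality or the Strichartz-type characterization $|f(x)-f(y)|\lesssim |x-y|^s\bigl(\mathcal{M}D_x^s f(x)+\mathcal{M}D_x^s f(y)\bigr)$, and the passage from this to $\|D_x^{s-1}\partial_x f\|_{L^{p_1}}$ with $p_1=\infty$ allowed is exactly where Li's refined argument (or a BMO substitute) is needed; (ii) for the resonant block $R(f,g)$ in case~(II), Bernstein alone does not close the sum in $L^p$ without an almost-orthogonality argument, so one should invoke the square-function characterization on the output side. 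These are the standard technical refinements, not gaps in the strategy.
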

\begin{rem}
	The results in Lemmas \ref{comm} and \ref{kpcomm} are still valid  for  $\mathcal{H}D_{x}^s$ instead of $D_{x}^s$.
\end{rem}

%\begin{thm}\label{leibnitzr}
%	Let $\frac{1}{2}<r<\infty,$ $1<p_{1},p_{2},q_{1},q_{2}\leq \infty$ satisfy 
%	\begin{equation*}
%		\frac{1}{r}=\frac{1}{p_{1}}+\frac{1}{q_{1}}=\frac{1}{p_{2}}+\frac{1}{q_{2}}.
%	\end{equation*}
%Given  $s>\max\left(0,\frac{1}{r}-1\right)$ or $s\in 2\mathbb{N},$ there exist $c=c(s,r,p_{1},q_{1},p_{2},q_{2})>0$ such that  for $f,g\in\mathcal{S}(\mathbb{R}^{n})$
%	\begin{equation*}
%		\|J^{s}(fg)\|_{L^{2}}\leq c \|J^{s}f\|_{L^{p_{1}}}\|g\|_{L^{q_{1}}}+c\|J^{s}g\|_{L^{p_{2}}}\|f\|_{L^{q_{2}}}.
%	\end{equation*}
%\end{thm}

%\begin{thm}\label{product}
%	Let   $s>\frac{1}{2},$ then for $f,g\in\mathcal{S}(\mathbb{R}),$
%		\begin{equation*}
%			\left\| D_{x}^{s}(f g)\right\|_{L^{2}}\lesssim \|D_{x}^{s}f\|_{L^{2}}\|g\|_{L^{\infty}_{x}}+\| f\|_{L^{\infty}}\|D_{x}^{s}g\|_{L^{2}}.
%		\end{equation*}
%\end{thm}
%\begin{thm}\label{Calderon}
%Let  $1<p<\infty,$ and  $l,m$ integers such that  $l,m\geq 0.$ Then  for any $f,g\in\mathcal{S}(\mathbb{R}),$ the following inequality holds
%	\begin{equation}\label{calderon}
%		\left\|\partial_{x}^{l}\left[\mathcal{H};f\right]\partial_{x}^{m}g\right\|_{L^{p}}\lesssim_{l,m,p}\|\partial_{x}^{l+m}f\|_{L^{\infty}}\|f\|_{L^{p}}.
%	\end{equation}
%\end{thm}

%We will need the following consequence of Kato-Ponce commutator estimates \cite{kato-ponce}.
%
%\begin{thm}\label{product}
%Let   $s>0$, then for $f,g\in\mathcal{S}(\mathbb{R}),$
%		\begin{equation*}
%			\left\| D_{x}^{s}(f g)\right\|_{L^{2}}\lesssim \|D_{x}^{s}f\|_{L^{2}}\|g\|_{L^{\infty}_{x}}+\| f\|_{L^{\infty}}\|D_{x}^{s}g\|_{L^{2}}.
%		\end{equation*}
%\end{thm}

In the appendix of \cite{DMP} Dawson, McGahagan and Ponce proved the following estimates.
\begin{lem}\label{propdmp}
Let $1<p,q<\infty.$ Then for all $f,g\in \mathcal{S}(\mathbb{R}),$ the following estimates are true. 
\begin{itemize}
\item[(i)]  if $l,m$ are integers such that  $l,m\geq 0$, then 
\begin{equation}\label{calderon}
		\left\|\partial_{x}^{l}\left[\mathcal{H};f\right]\partial_{x}^{m}g\right\|_{L^{p}}\lesssim_{l,m,p}\|\partial_{x}^{l+m}f\|_{L^{\infty}}\|g\|_{L^{p}}.
	\end{equation}
\item[(ii)]
if  $0\leq \alpha<1, 0<\beta\leq 1, 0<\alpha+\beta \leq 1$  and $\delta>1/q,$ then 
\begin{equation} \label{gen:calderon}
\|D_{x}^{\alpha+\beta}(gD_{x}^{1-(\alpha+\beta)}f)-D_{x}^{\alpha}(gD_{x}^{1-\alpha}f)\|_{L^{p}}
\lesssim _{\alpha,\beta,p,\delta}\|J^{\delta}\partial_{x}g\|_{L^{q}}\|f\|_{L^{p}}.
\end{equation}
\end{itemize}
\end{lem}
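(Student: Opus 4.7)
For part (i), my plan is to analyze the operator $(f,g)\mapsto \partial_x^l[\mathcal{H};f]\partial_x^m g$ as a bilinear Fourier multiplier. Using $\widehat{\mathcal{H}h}(\xi)=-i\,\mathrm{sgn}(\xi)\hat h(\xi)$ and writing $\xi=\mu+\eta$ with $\mu$ the frequency of $f$ and $\eta$ the frequency of $g$, a direct computation gives the bilinear symbol
\begin{equation*}
    m_{l,m}(\mu,\eta)\;=\;-i^{l+m+1}(\mu+\eta)^l\,\eta^m\bigl(\mathrm{sgn}(\mu+\eta)-\mathrm{sgn}(\eta)\bigr),
\end{equation*}
which vanishes unless $\mu+\eta$ and $\eta$ have opposite signs and is therefore supported in the ``high-low'' region $\{|\mu|\gtrsim|\eta|\}$, being bounded there by $C|\mu|^{l+m}$. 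This is precisely the Calder\'on commutator frequency structure, with $f$ forced to carry the high frequency. After a Littlewood-Paley decomposition $f=\sum_N f_N$, $g=\sum_M g_M$, only blocks with $N\gtrsim M$ contribute; each such piece factors as $\partial_x^{l+m}f_N\cdot T_{N,M}(g_M)$ with $T_{N,M}$ a Calder\'on-Zygmund operator of $L^p\to L^p$ norm uniform in $N,M$, and summation delivers the claimed bound. An alternative, more algebraic route is induction on $N=l+m$ via the Leibniz identity
\begin{equation*}
    \partial_x^l[\mathcal{H};f]\;=\;\sum_{k=0}^l\binom{l}{k}[\mathcal{H};\partial_x^k f]\,\partial_x^{l-k},
\end{equation*}
obtained by iterating $\partial_x[\mathcal{H};f]=[\mathcal{H};\partial_x f]+[\mathcal{H};f]\partial_x$. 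All $k\ge 1$ terms have total derivative order $l+m-k<N$ and close by induction, while the remaining $k=0$ term $[\mathcal{H};f]\partial_x^{l+m}g$ is handled by the kernel representation of $\mathcal{H}$, integrating by parts $l+m$ times in $y$, and invoking the classical Calder\'on commutator theorem for the resulting iterated commutator kernels.

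For part (ii), the starting point is the algebraic identity
\begin{equation*}
    D_x^{\alpha+\beta}\!\bigl(g\,D_x^{1-(\alpha+\beta)}f\bigr)-D_x^{\alpha}\!\bigl(g\,D_x^{1-\alpha}f\bigr)\;=\;D_x^{\alpha}\bigl[D_x^{\beta};g\bigr]D_x^{1-(\alpha+\beta)}f,
\end{equation*}
which reduces the lemma to proving the commutator estimate
\begin{equation*}
    \bigl\|D_x^\alpha[D_x^\beta;g]D_x^\gamma f\bigr\|_{L^p}\;\lesssim\;\|\partial_x g\|_{L^\infty}\|f\|_{L^p},\qquad \gamma:=1-\alpha-\beta\ge 0.
\end{equation*}
The target norm $\|J^\delta\partial_x g\|_{L^q}$ is then recovered through the Sobolev embedding $W^{\delta,q}(\R)\hookrightarrow L^\infty(\R)$, which is valid precisely because $\delta>1/q$ is strict. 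The bilinear symbol of the operator on the left-hand side is
\begin{equation*}
    \sigma(\mu,\eta)\;=\;|\mu+\eta|^{\alpha}\bigl(|\mu+\eta|^{\beta}-|\eta|^{\beta}\bigr)|\eta|^{\gamma},
\end{equation*}
and I would analyze it by Littlewood-Paley. In the low-high regime $|\mu|\ll|\eta|$, the fractional mean-value identity $|\mu+\eta|^\beta-|\eta|^\beta\simeq\beta\mu\,\mathrm{sgn}(\eta)|\eta|^{\beta-1}$ extracts one power of $\mu$; since $\alpha+\beta+\gamma=1$ forces $\alpha+(\beta-1)+\gamma=0$, the residual $\eta$-weight cancels and the operator reduces, up to Coifman-Meyer controlled remainders, to $\beta\,\partial_x g\cdot\mathcal{H}f$, which is trivially bounded by $\|\partial_x g\|_{L^\infty}\|f\|_{L^p}$. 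The high-low and balanced regimes are handled by pairing $D_x^\alpha g$ with $D_x^{1-\alpha}f$ via H\"older, invoking the classical fractional Leibniz rule from Lemma \ref{comm}.

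The main obstacle is the low-high regime of part (ii). A crude triangle bound on $|\mu+\eta|^\beta-|\eta|^\beta$ would place the entire derivative $D_x^{\beta+\gamma}=D_x^{1-\alpha}$ onto $f$, costing a full fractional derivative that the right-hand side of the estimate does not supply. The content of the lemma is therefore the \emph{sharp} extraction of a $\mu$-factor---i.e., a derivative of $g$---from the cancellation $|\mu+\eta|^\beta-|\eta|^\beta$, and it relies crucially on the exponent identity $\alpha+\beta+\gamma=1$ to leave no residual weight in $\eta$. For part (i), the analogous subtlety is the cancellation $\mathrm{sgn}(\mu+\eta)-\mathrm{sgn}(\eta)$ which confines $m_{l,m}$ to the high-low region and thereby prevents any derivatives of $g$ from appearing in the final bound; the symbolic viewpoint makes this transparent, whereas the inductive-kernel approach requires explicit bookkeeping of cancellations between Leibniz terms.
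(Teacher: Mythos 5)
First, a point of comparison: the paper does not prove Lemma \ref{propdmp} at all --- it is quoted as a known result from the appendix of Dawson--McGahagan--Ponce \cite{DMP}, with the sharpened form of part (ii) recalled from D.~Li \cite{Dli} in Remark \ref{dercom}. So your attempt cannot be matched against an internal argument; it can only be judged on its own terms and against the cited sources.

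On those terms, your algebraic skeleton is sound: the bilinear symbol computation and the commutator Leibniz identity $\partial_x[\mathcal{H};f]=[\mathcal{H};\partial_xf]+[\mathcal{H};f]\partial_x$ in part (i) are correct, as is the key identity in part (ii) rewriting the difference as $D_x^{\alpha}[D_x^{\beta};g]D_x^{1-(\alpha+\beta)}f$, and the Sobolev embedding for $\delta>1/q$ correctly reduces \eqref{gen:calderon} to an $L^{\infty}$-type commutator bound. But two genuine gaps remain. For (i), after the induction the base case $[\mathcal{H};f]\partial_x^{N}g$ carries the entire content of the estimate, and you dispose of it by invoking the classical Calder\'on commutator theorem for the $N$-th order commutator; that is a legitimate citation (and is essentially how \cite{DMP} proceed), but it is not a proof, and your alternative Littlewood--Paley route leaves the diagonal sum over $N\sim M$ unjustified: an $\ell^1$ sum of $\|g_M\|_{L^{p}}$ over dyadic blocks is not controlled by $\|g\|_{L^{p}}$ without an orthogonality or square-function argument. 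For (ii), by passing to $\|\partial_x g\|_{L^{\infty}}$ you are in fact undertaking to prove the strictly stronger sharp estimate \eqref{sharp_gen:calderon} of Remark \ref{dercom} (Proposition 3.10 of \cite{Dli}), which is harder than the stated lemma; and the decisive difficulty sits precisely where your sketch is thinnest: the symbol $|\mu+\eta|^{\alpha}\bigl(|\mu+\eta|^{\beta}-|\eta|^{\beta}\bigr)|\eta|^{\gamma}$ is singular at $\eta=0$, so the factor $|\eta|^{\gamma}$ violates the Coifman--Meyer derivative conditions uniformly and the ``Coifman--Meyer controlled remainders'' in the low-high regime, as well as the entire balanced regime $|\mu|\sim|\eta|$, require the frequency-localized expansions and careful summation that occupy the cited references. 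As written, the proposal is a correct roadmap rather than a complete proof.
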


\begin{rem}\label{dercom}
Recently, in Proposition 3.10 of \cite{Dli}, D. Li improved estimate \eqref{gen:calderon}  by giving the following sharp version. For any $0\leq \alpha<1, 0<\beta\leq 1-\alpha$ and $1<p<\infty,$ we have 
\begin{equation*}
\|D_{x}^{\alpha+\beta}(gD_{x}^{1-(\alpha+\beta)}f)-D_{x}^{\alpha}(gD_{x}^{1-\alpha}f)\|_{L^{p}}\lesssim _{\alpha,\beta,p}\|D_{x}g\|_{\mathrm{BMO}}\|f\|_{L^{p}}.
\end{equation*}
Consequently,
\begin{equation}  \label{sharp_gen:calderon}
\|D_{x}^{\alpha+\beta}(gD_{x}^{1-(\alpha+\beta)}f)-D_{x}^{\alpha}(gD_{x}^{1-\alpha}f)\|_{L^{p}}\lesssim _{\alpha,\beta,p}\|\partial_{x}g\|_{L^{\infty}}\|f\|_{L^{p}}.
\end{equation}
\end{rem}

Finally, we list some estimates involving the Bessel and Riesz potentials. 
\begin{lem} \label{Bessel:Riesz}
The linear operators $T_1(D):=J^{-1}_x\mathcal{H}\partial_x^2-\partial_x$ and  $T_2(D)=D_x^{\frac32}J^{-1}_x-D_x^{\frac12}$ are bounded in $L^2(\mathbb R)$. More precisely, there exist $c_1>0$, $c_2>0$ such that
\begin{equation} \label{Bessel:Riesz.1}
\|T_1(D)f\|_{L^2} \leq c_1 \|f\|_{L^2}, \quad \forall \, f \in L^2(\mathbb R)
\end{equation}
and
\begin{equation} \label{Bessel:Riesz.2}
\|T_2(D)f\|_{L^2} \leq c_2 \|f\|_{L^2}, \quad \forall \, f \in L^2(\mathbb R) .
\end{equation}
\end{lem}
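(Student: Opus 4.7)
Both $T_1(D)$ and $T_2(D)$ are, by definition, Fourier multipliers. The plan is to compute their symbols explicitly, observe that the apparent top-order terms cancel, and check that the resulting symbols are uniformly bounded in $\xi\in\mathbb{R}$. Then \eqref{Bessel:Riesz.1} and \eqref{Bessel:Riesz.2} follow at once from Plancherel's theorem.

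First I would record the symbols of the elementary building blocks: $\widehat{\partial_x f}(\xi)=i\xi\widehat f(\xi)$, $\widehat{\mathcal{H}f}(\xi)=-i\,\mathrm{sgn}(\xi)\widehat f(\xi)$, $\widehat{D_x^s f}(\xi)=|\xi|^s\widehat f(\xi)$ and $\widehat{J_x^s f}(\xi)=(1+\xi^2)^{s/2}\widehat f(\xi)$. Composing, the symbol of $T_1(D)$ is
\begin{equation*}
m_1(\xi)=\frac{(-i\,\mathrm{sgn}(\xi))(-\xi^2)}{(1+\xi^2)^{1/2}}-i\xi
= i\xi\left(\frac{|\xi|}{(1+\xi^2)^{1/2}}-1\right),
\end{equation*}
while the symbol of $T_2(D)$ is
\begin{equation*}
m_2(\xi)=\frac{|\xi|^{3/2}}{(1+\xi^2)^{1/2}}-|\xi|^{1/2}
=|\xi|^{1/2}\left(\frac{|\xi|}{(1+\xi^2)^{1/2}}-1\right).
\end{equation*}

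The only real point is to control the common factor $\frac{|\xi|}{(1+\xi^2)^{1/2}}-1$, which vanishes at $\xi=0$ and tends to $0$ at infinity. I would use the rationalization
\begin{equation*}
\frac{|\xi|}{(1+\xi^2)^{1/2}}-1
=\frac{|\xi|-(1+\xi^2)^{1/2}}{(1+\xi^2)^{1/2}}
=\frac{-1}{(1+\xi^2)^{1/2}\bigl(|\xi|+(1+\xi^2)^{1/2}\bigr)},
\end{equation*}
which makes the decay quantitative. Substituting back gives
\begin{equation*}
|m_1(\xi)|=\frac{|\xi|}{(1+\xi^2)^{1/2}\bigl(|\xi|+(1+\xi^2)^{1/2}\bigr)},\qquad
|m_2(\xi)|=\frac{|\xi|^{1/2}}{(1+\xi^2)^{1/2}\bigl(|\xi|+(1+\xi^2)^{1/2}\bigr)}.
\end{equation*}
Each quotient is continuous on $\mathbb{R}$, behaves like $|\xi|$ (resp.\ $|\xi|^{1/2}$) near the origin and like $1/(2|\xi|)$ (resp.\ $1/(2|\xi|^{3/2})$) at infinity, hence is bounded. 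Setting $c_j:=\|m_j\|_{L^\infty(\mathbb{R})}$ and applying Plancherel,
\begin{equation*}
\|T_j(D)f\|_{L^2}=\|m_j\,\widehat f\,\|_{L^2}\le c_j\|\widehat f\,\|_{L^2}=c_j\|f\|_{L^2},
\end{equation*}
which is exactly \eqref{Bessel:Riesz.1}--\eqref{Bessel:Riesz.2}. There is no real obstacle here; the content of the lemma is the algebraic cancellation of the leading symbol, and once the identity above is in hand the rest is a direct verification.
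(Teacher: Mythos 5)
Your proposal is correct and follows essentially the same route as the paper: compute the multiplier symbols, observe that they reduce to $i\xi$ (resp. $|\xi|^{1/2}$) times $\frac{|\xi|-(1+\xi^2)^{1/2}}{(1+\xi^2)^{1/2}}$, check boundedness, and conclude by Plancherel. Your rationalization step just makes explicit the boundedness that the paper asserts directly, so there is nothing to add.
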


\begin{proof} 
Observe that $T_1(D)$ and $T_2(D)$ are Fourier multipliers whose symbols given by
\begin{eqnarray*}
T_1(\xi)&=&\frac{i\xi}{(1+|\xi|^2)^{\frac12}}\left(|\xi|-(1+|\xi|^2)^{\frac12}\right) \\ 
T_2(\xi)&=&\frac{|\xi|^{\frac12}}{(1+|\xi|^2)^{\frac12}}\left(|\xi|-(1+|\xi|^2)^{\frac12}\right)
\end{eqnarray*}
are bounded over $\mathbb R$. Estimates \eqref{Bessel:Riesz.1} and \eqref{Bessel:Riesz.2} follow then from Plancherel's identity by choosing $c_1:=\sup_{\mathbb R}|T_1(\xi)|$ and $c_2:=\sup_{\mathbb R}|T_2(\xi)|$.
\end{proof}
%%%%%%%%%%%%%%%%%%%%%%%%%%%%%%%%%%%%%%%%%%%%%%%%
\section{Energy estimates} \label{Sec:EnEs}
%%%%%%%%%%%%%%%%%%%%%%%%%%%%%%%%%%%%%%%%%%%%%%%%
In order to simplify the notations we will use $\rho=\beta=1$ in \eqref{sbo}. We notice that in the case that $\rho$ would be negative we has to consider in the second equation in \eqref{sbo} the operator $\hil\partial_x^2$ with a positive 
sign in front due to the trick introduced to handle the loss of derivatives in the modified energy estimate.

\subsection{Energy estimates for the solutions of \eqref{sbo}}\label{sec3.1}
As we commented in the introduction we cannot obtain an a priori estimate directly for solutions of the system \eqref{sbo}. We have extra terms that we cannot handle. More precisely, 

\begin{equation*}
2\,  \mathrm{Im}\int_{\mathbb{R}}u \, D_{x}^{s+1}\overline{u}D_{x}^{s}v\, dx
\quad \text{and} \quad
\int_{\mathbb{R}}\, D_{x}^{s}v\partial_xD_{x}^{s}(|u|^2)\, dx.
\end{equation*}

To get a priori estimates we need to modify the usual energy.  We define this new functional as follows:

\begin{defn} For $t\ge 0$ and $s\ge \frac12$,  we define the \emph{modified energy} as being
\begin{equation}\label{m-energy}
\begin{split}
E_m^s(t):=& \|u(t)\|_{L^2_{x}}^2+\|D^{s+\frac12}_{x}u(t)\|^2_{L^2_{x}} +\|v(t)\|_{L^2_{x}}^2+\frac12\|D^{s}v(t)\|^2_{L^2_{x}}\\
&\hskip10pt+\int_{\mathbb{R}} D^{s-\frac12}_xv(t)D^{s-\frac12}_x\left(|u(t)|^2\right)\,dx. %{\color{red}(= \int_{\mathbb{R}} D^{s-\frac12}_xv(t)D^{s-\frac12}_x(|u(t)|^2)\,dx)}.
\end{split}
\end{equation}
\end{defn}
With this definition at hand, we can establish the a priori estimates we need in this work.

\begin{prop}\label{import} Let $s>\frac12$ and $T>0$. There exist positive constants $c_s$ and $\kappa_{1,s}$ such that for any $(u,v) \in C([0,T] : H^{s+\frac12}(\mathbb R)) \times H^s(\mathbb R))$ solution of \eqref{sbo} satisfying 
\[\|\left(u(t),v(t)\right)\|_{H^{s+\frac{1}{2}}_{x}\times H^{s}_{x}}\le \frac1{c_s} ,\] the following estimates hold true. 
\begin{enumerate}
\item{} \underline{Coercivity}:
\begin{equation}\label{m-energy1}
\frac12 \Big(\|u(t)\|_{H^{s+\frac12}_{x}}^2+\|v(t)\|_{H^s_{x}}^2\Big)\le E_m^s(t)\le 
\frac32 \Big(\|u(t)\|_{H^{s+\frac12}_{x}}^2+\|v(t)\|_{H^s_{x}}^2\Big)
\end{equation}
\item{} \underline{Energy estimate}:
\begin{equation}\label{m-energy2}
\frac{d}{dt} E_m^s(t) \lesssim \Big(1+\|\partial_x v(t)\|_{L^{\infty}_{x}}\Big) E_m^s(t) ,
\end{equation}
 and as a consequence,
\begin{equation}\label{m-energy3}
\begin{split}
\sup_{t\in [0,T]} \|(u(t),v(t))\|_{H^{s+\frac12}_{x}\times H^s_{x}}
\le 2e^{\kappa_{1,s}(T+\|\partial_xv\|_{L^{1}_{T}L^{\infty}_{x}})}\|(u_0,v_0)\|_{H^{s+\frac12}_{x}\times H^s_{x}}
 .
 \end{split}
 \end{equation}
 \end{enumerate}
 \end{prop}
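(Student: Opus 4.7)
\emph{Coercivity of $E_m^s$.} The four quadratic summands in \eqref{m-energy} already reconstitute $\|u\|_{H^{s+1/2}}^2+\|v\|_{H^s}^2$ up to harmless $L^2$ comparisons, so only the cross-term $I(t):=\int D_x^{s-1/2}v\cdot D_x^{s-1/2}(|u|^2)\,dx$ needs attention. By Cauchy--Schwarz, Lemma \ref{comm} applied to $|u|^2=u\bar u$, and the embedding $H^{s+1/2}\hookrightarrow L^\infty$ (valid since $s>1/2$), one obtains $|I(t)|\lesssim \|v\|_{H^s}\|u\|_{L^\infty}\|u\|_{H^{s-1/2}}\lesssim \|v\|_{H^s}\|u\|_{H^{s+1/2}}^2$. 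The smallness hypothesis with $c_s$ sufficiently large then forces $|I(t)|\le \tfrac12(\|u\|_{H^{s+1/2}}^2+\|v\|_{H^s}^2)$, which yields \eqref{m-energy1}.

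\emph{Identification of the bad terms.} I compute $\tfrac{d}{dt}E_m^s$ term by term using \eqref{sbo} with $\rho=\beta=1$. The skew-adjoint linear parts contribute zero and $\|u\|_{L^2}^2$ is conserved, while $\tfrac{d}{dt}\|v\|_{L^2}^2=-2\int|u|^2\partial_xv$ is immediately $\lesssim \|\partial_xv\|_{L^\infty}E_m^s$. Pairing $D_x^{s+1/2}$ of the $u$-equation with $\overline{D_x^{s+1/2}u}$ and taking real parts: the cubic $|u|^2u$ contributes only through commutators estimated by Lemma \ref{kpcomm}(II) and Sobolev embedding, and the Leibniz expansion of $D_x^{s+1/2}(uv)$ produces a harmless real integral (since $v\,|D_x^{s+1/2}u|^2\in \mathbb R$) plus the \emph{bad term}
$$B_1 := -2\,\mathrm{Im}\int u\, D_x^{s+1/2}v\,\overline{D_x^{s+1/2}u}\, dx.$$
Analogously, the standard Benjamin--Ono commutator controls the $v\partial_xv$ contribution to $\tfrac12\tfrac{d}{dt}\|D_x^sv\|_{L^2}^2$ by $\|\partial_xv\|_{L^\infty}\|D_x^sv\|_{L^2}^2$ via Lemmas \ref{comm}--\ref{propdmp}, leaving the coupling \emph{bad term}
$$B_2 := \int D_x^s\partial_x(|u|^2)\cdot D_x^sv\, dx = \int \mathcal H D_x^{s+1}v\cdot D_x^s(|u|^2)\, dx$$
after applying $\partial_xD_x^s=-\mathcal H D_x^{s+1}$ and integration by parts. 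Both $B_1$ and $B_2$ lose half a derivative on $v$, which only lives in $H^s$.

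\emph{Cancellation by $\dot I$ and conclusion.} Differentiating $I$ in time, the linear Benjamin--Ono part of $\partial_tv$ produces $-\int \mathcal H D_x^{s+3/2}v\cdot D_x^{s-1/2}(|u|^2)\,dx$, which differs from $-B_2$ precisely by a half-derivative redistribution between $v$ and $|u|^2$ of the type handled by the sharp commutator estimate \eqref{sharp_gen:calderon} in Remark \ref{dercom}, yielding a remainder $\lesssim \|\partial_xv\|_{L^\infty}\|u\|_{H^{s+1/2}}^2$. For the $\partial_t|u|^2$ factor, the Schr\"odinger continuity equation $\partial_t|u|^2=-\partial_xJ$ with $J:=2\,\mathrm{Im}(\bar u\,\partial_xu)$ (the $uv$ and $|u|^2u$ contributions cancelling pointwise) gives, after integration by parts, a term that cancels $B_1$ up to a similar commutator remainder bounded by $\|u\|_{L^\infty}E_m^s$. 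All remaining pieces of $\dot I$ (from $v\partial_xv$, $\partial_x|u|^2$, and any Bessel-vs-Riesz gap treated by Lemma \ref{Bessel:Riesz}) are lower order and are absorbed into $(1+\|\partial_xv\|_{L^\infty})E_m^s$, establishing \eqref{m-energy2}; then \eqref{m-energy3} follows from Gronwall combined with coercivity \eqref{m-energy1}. The main obstacle is the precise bookkeeping of the cancellation of $B_1$ and $B_2$: one must rewrite each trilinear integral of the form $\int g\,D_x^\alpha f\cdot D_x^\beta h$ into a symmetric counterpart modulo a remainder costing only $\|\partial_x g\|_{L^\infty}$, which is exactly what the sharp estimate \eqref{sharp_gen:calderon} provides, while carefully tracking signs through $\partial_x=-\mathcal H D_x$ and the interplay of $\mathcal H$ with $D_x^s$.
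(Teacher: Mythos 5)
Your proposal is correct and follows essentially the same route as the paper: one adds the cross term to the energy, its time derivative cancels the two bad terms $B_1$ and $B_2$ exactly, and all remaining contributions are controlled by the Kato--Ponce type commutator estimates, smallness for coercivity, and Gronwall. A minor remark: the half-derivative redistribution that matches the linear Benjamin--Ono contribution with $-B_2$ is an exact Plancherel identity between Fourier multipliers (no function coefficient is moved), so no appeal to \eqref{sharp_gen:calderon} and no remainder is needed there, just as the paper's cancellations ${\rm III}_{1,1}+{\rm I}_2=0$ and ${\rm II}_{1,2}+{\rm III}_{2,2}=0$ are exact once the commutator term ${\rm III}_{2,1}$ is extracted.
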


\begin{rem} \label{rem:en_est}
Proposition \ref{import} also holds for solutions of the system \eqref{sbo-scale} with implicit constant independent of $0<\lambda \le 1$.
\end{rem}

\begin{proof} 

%Without loss of generality and for the sake of simplicity, we will deal with the case $\frac12<s < 1$. The case $s \ge 1$ would follow similarly. 
%
%\smallskip

The proof of \eqref{m-energy1} follows from the Cauchy-Schwarz inequality, the Leibniz rule and the Sobolev embedding. 
\smallskip

Let us consider $E_m^s(t)$ in \eqref{m-energy}. Differentiating with respect to $t$ it follows that
\begin{equation}\label{energy}
	\begin{split}
	\frac{d}{dt}E_{m}^s&=\frac{1}{2}\frac{d}{dt}\left(\int_{\mathbb{R}}(D_{x}^{s}v)^{2}\,d x\right)+\frac{d}{dt}\left(\int_{\mathbb{R}}|D_{x}^{s+\frac{1}{2}}u|^{2}\,dx \right)\\
	&\quad +\frac{d}{dt}\left(\int_{\mathbb{R}}D_{x}^{s-\frac12}vD_{x}^{s-\frac12}(|u|^{2})\, dx\right)+\frac{d}{dt} \left(\int_{\mathbb{R}} v^2\,dx\right)\\
	&={\rm I+II+III+IV}.
	\end{split}
\end{equation}

On one hand, using the second equation in \eqref{sbo} we have,
\begin{equation}\label{energy-1}
	\begin{split}
	{\rm I}&=-\int_{\mathbb{R}}D_{x}^{s}vD_{x}^{s}(v\partial_{x}v)\, dx+\int_{\mathbb{R}}D_{x}^{s}v\,\partial_{x}D_{x}^{s}(|u|^{2})\, dx\\
	&=I_{1}+I_{2}.
	\end{split}
\end{equation}
We obtain after integrating by parts 
\begin{equation*}
	\begin{split}
		{\rm I}_{1}&=-\int_{\mathbb{R}}D_{x}^{s}v\, [D_{x}^{s}; v]\partial_{x}v\, dx+\frac{1}{2}\int_{\mathbb{R}}\partial_{x}v\, (D_{x}^{s}v)^{2}\, dx\\
		&={\rm I}_{1,1}+{\rm I}_{1,2}.
	\end{split}
\end{equation*}
Thus, in virtue of  Lemma \ref{kpcomm} we  get 
\begin{equation*}
	{\rm |I}_{1,1}|\lesssim \|\partial_{x}v\|_{L^{\infty}}\|D_{x}^{s}v\|_{L^{2}}^{2}.
\end{equation*}
For ${\rm I}_{1,2}$, it readily follows that 
\begin{equation*}
	{\rm |I}_{1,2}|\lesssim \|\partial_{x}v\|_{L^{\infty}}\|D_{x}^{s}v\|_{L^{2}}^{2}.
\end{equation*}
Thus
\begin{equation}\label{term1-1}
{\rm |I}_{1}| \lesssim \|\partial_{x}v\|_{L^{\infty}}\|D_{x}^{s}v\|_{L^{2}}^{2}.
\end{equation}

Later on we will come back  on ${\rm I}_{2}$ since it requires to obtain the full description of  ${\rm III}$.
\medskip

After using integration by parts and system \eqref{sbo} we have that
\begin{equation*}
\begin{split}
{\rm II}&=-2\, \mathrm{Re}\left(\mathrm{i}\int_{\mathbb{R}}D_{x}^{s+\frac{1}{2}}\overline{u}D_{x}^{s+\frac{1}{2}}(uv)\, dx\right)\\
		&\quad -2 \,\mathrm{Re}\left(\mathrm{i}\int_{\mathbb{R}}D_{x}^{s+\frac{1}{2}}\overline{u} D_{x}^{s+\frac{1}{2}}(u|u|^{2})\, dx\right)\\
		&={\rm II}_{1}+{\rm II}_{2}.
	\end{split}
\end{equation*}
In the case of ${\rm II}_{1}$  we rewrite it as 
\begin{equation*}
	\begin{split}
		{\rm II}_{1}&=2\,  \mathrm{Im}\int_{\mathbb{R}}D_{x}^{s+1}\overline{u}[D_{x}^{s}; u]v\, dx +2\,  \mathrm{Im}\int_{\mathbb{R}}u \, D_{x}^{s+1}\overline{u}D_{x}^{s}v\, dx\\
		&={\rm II}_{1,1}+{\rm II}_{1,2}.
	\end{split}
\end{equation*}

Notice that  for ${\rm II}_{1,1}$ we have
\begin{equation*}
\begin{split}
	{\rm II}_{1,1}&=2 \,\mathrm{Im}\int_{\mathbb{R}}D_{x}^{s+\frac{1}{2}}\overline{u}\, D_{x}^{\frac{1}{2}}\left[D_{x}^{s}; u \right]v\, dx\\
	&=2\,\mathrm{Im}\int_{\mathbb{R}}D_{x}^{s+\frac{1}{2}}\overline{u}\,[D_{x}^{s+\frac12}; u]v\, dx-2\,\mathrm{Im}\int_{\mathbb{R}}D_{x}^{s+\frac{1}{2}}\overline{u}\,[D_{x}^{\frac{1}{2}}; u]D_{x}^{s}v\, dx\\
	&={\rm II}_{1,1,1}+{\rm II}_{1,1,2}.
\end{split}
\end{equation*}

The terms ${\rm II}_{1,1,1}$ and ${\rm II}_{1,1,2}$ can be controlled after applying Lemma \ref{kpcomm} and the Sobolev embedding (recalling $s>\frac12$).   In the first place, we have
\begin{equation}\label{term2-1}
\begin{split}
|{\rm II}_{1,1,1}|&\lesssim \|D_{x}^{s+\frac{1}{2}}u\|_{L^{2}} \Big(\|D_{x}^{s+\frac{1}{2}}u\|_{L^{2}}\|v\|_{L^{\infty}}+\|\partial_xu\|_{L^{2_+}}\|D^{s-\frac12}v\|_{L^{\infty_-}}\Big)\\
&\lesssim 
 \|u\|_{H^{s+\frac12}}^2\|v\|_{H^s},
\end{split}
\end{equation}
and in the second place, we get
\begin{equation}\label{term3-1}
|{\rm II}_{1,1,2}| %&\lesssim \|D^{s+\frac12}_xu\|_{L^2}\|\hil D^{\frac12}_xu\|_{L^{\infty}}\|D^s_xv\|_{L^2}\\
%& \lesssim {\color{red}\|D^{s+\frac12}_xu\|_{L^2}\big(\|D^{\frac12}_xu\|_{L^2}+\|D^{s+\frac12}_xu\|_{L^2}\big)\|D^s_xv\|_{L^2}}\\
\lesssim \|D^{s+\frac12}_xu\|_{L^2}\|D^s_xv\|_{L^2}\|D^{-\frac12}_x\partial_xu\|_{L^{\infty}} \lesssim \|u\|_{H^{s+\frac12}}^2\|v\|_{H^s}.
\end{equation}
To handle ${\rm II}_{1,2}$  we  require to deploy the terms that compounds ${\rm III},$  so that for the moment  we will  continue estimating the  remainder terms. 

Concerning ${\rm II}_2$  we have since $H^{s+\frac12}$ is a Banach algebra that 
\begin{equation}\label{term4-1}
|{\rm II}_{2}|\lesssim \|u\|_{H^{s+\frac12}}^{4} .
\end{equation}
Next we  focus our attention on ${\rm III}$ that is by itself the most complicated term, since it contains the interaction between $u$ and $v$. In the first place,
\begin{equation*}
	\begin{split}
		{\rm III}&= \int_{\mathbb{R}}D_{x}^{s-\frac12}v_{t}D_{x}^{s-\frac12}(|u|^{2})\, dx
		+ 2\, \mathrm{Re}\left(\int_{\mathbb{R}}D_{x}^{s-\frac12}vD_{x}^{s-\frac12}(\overline{u}u_{t})\, dx\right)\\
		&={\rm III}_{1}+{\rm III}_{2}.
	\end{split}
\end{equation*}
Since $v$ satisfies the BO equation in system \eqref{sbo} is quite clear that 
\begin{equation*}
	\begin{split}
		{\rm III}_{1}&= \int_{\mathbb{R}}D_{x}^{s-1}\mathcal{H}\partial_{x}^{2}v \, D_{x}^{s}(|u|^{2})\, dx
		-\frac12 \int_{\mathbb{R}}D_{x}^{s-1}\partial_x(v^2) D_{x}^{s}(|u|^{2})\, dx\\
	%	&\quad +\frac{\kappa\beta}{\alpha}\int_{\mathbb{R}}\partial_{x}D_{x}^{s-1}(|u|^{2})D_{x}^{s}(|u|^{2})\, dx\\
		&={\rm III}_{1,1}+{\rm III}_{1,2}.
	\end{split}
\end{equation*}
After observing that $D^1_x=\mathcal{H}\partial_x$, we obtain that
\begin{equation*}
{\rm III}_{1,1}+{\rm I}_{2}=0 .
\end{equation*}
In the case of ${\rm III}_{1,2}$, we deduce from the Cauchy-Schwarz inequality and the fact that $H^s$ is a Banach algebra for $s>\frac{1}{2}$, that
\begin{equation}\label{term5-1}
{\rm |III}_{1,2}| \lesssim \|D_{x}^{s}(v^2)\|_{L^{2}}\|D_{x}^{s}(|u|^2)\|_{L^{2}} \lesssim  \|u\|_{H^{s}}^{2}\|v\|_{H^{s}}^{2} .
\end{equation}
Now we turn our attention to ${\rm III}_{2}$. We obtain after using the Schr\"{o}dinger equation in \eqref{sbo} and integrating by parts that
\begin{equation*}
{\rm III}_{2}=2 \, \mathrm{Re}\left(\mathrm{i}\int_{\mathbb{R}}D_{x}^{s-\frac12}vD_{x}^{s-\frac12}(\overline{u}\partial_{x}^{2}u)\, dx\right)
=2\, \mathrm{Im} \int_{\mathbb{R}}D_{x}^{s-1}\partial_{x}v D_{x}^{s}(\overline{u}\partial_{x}u)\, dx .
\end{equation*}
%Regards to $M_{2}$ is clear that 
%\begin{equation*}
%	\begin{split}
%		M_{2}(t)&=2\, \mathrm{Im} \int_{\mathbb{R}}D_{x}^{s-1}\partial_{x}v D_{x}^{s}(\overline{u}\partial_{x}u)\, dx
%		+2 \,\mathrm{Im}\int_{\mathbb{R}}D_{x}^{s-1}v D_{x}^{s}(|u|^{2})\, dx\\
%		&=M_{2,1}(t)+M_{2,2}(t).
%	\end{split}
%\end{equation*}
Since $\partial_{x}=-\mathcal{H}D_{x}$  it follows  that
\begin{equation*}
	\begin{split}
		{\rm III}_2&=2\,\mathrm{Im}\int_{\mathbb{R}}D_{x}^{s}v\, [\mathcal{H}D_{x}^{s}; \overline{u}]\partial_{x}u \, dx+2\,\mathrm{Im}\int_{\mathbb{R}}\overline{u} \, D_{x}^{s}vD_{x}^{s+1}u\, dx\\
		&={\rm III}_{2,1}+{\rm III}_{2,2}.
	\end{split}
\end{equation*}
After gathering together ${\rm III}_{2,2}$ and ${\rm II}_{1,2}$   we get 
\begin{equation*}
	{\rm II}_{1,2}+{\rm III}_{2,2}=0 .
\end{equation*}
For ${\rm III}_{2,1}$ there is no cancellation. Instead  we estimate directly by using Lemma \ref{kpcomm} 
\begin{equation*}
|{\rm III}_{2,1}|\lesssim \|D_{x}^{s}v(t)\|_{L^{2}}\big\|[\mathcal{H}D^s_x, \overline{u}]\partial_xu \big\|_{L^2}
\lesssim \|D_{x}^{s}v(t)\|_{L^{2}}\|D^{s-1}\partial_xu\|_{L^{\infty_-}}\|\partial_xu\|_{L^{2_+}}
\end{equation*}
so that
\begin{equation}\label{term6-1}
|{\rm III}_{2,1}|\lesssim  \|D_{x}^{s}v\|_{L^{2}}\|u\|_{H^{s+\frac12}}^2
\end{equation}
since $s>\frac12$. 

Finally, we estimate ${\rm IV}$. Using the second equation in \eqref{sbo}, integrating by parts and using properties of the Hilbert transform we have
\begin{equation}
%\begin{split}
\frac{d}{dt} \int_{\mathbb{R}} v^2\,dx= 2\int_{\mathbb{R}} v v_t\,dx\\
%&=2\int_{\mathbb{R}} v\mathcal{H}\partial_x^2v\,dx
=2\int v\partial_x(|u|^2)\,dx.
%=2\int_{\mathbb{R}}v \Big(\partial_xu\bar{u}+u\partial_x\bar{u}\Big)\,dx\\
%&\le 4\|\partial_x u(t)\|_{L^{\infty}}\|v(t)\|_{L^2}\|u(t)\|_{L^2}.
%\end{split}
\end{equation} 
Thus it follows since $H^{s+\frac12}$ is a Banach algebra for $s>\frac12$ that 
\begin{equation}\label{term7-1}
|{\rm IV}| \lesssim  \|v\|_{L^{2}}\|u^2\|_{H^{1}} \lesssim  \|v\|_{L^{2}}\|u\|_{H^{s+\frac12}}^2 .
\end{equation}

Gathering the estimates from \eqref{energy} to \eqref{term7-1}, using the definition of $E_m^s(t)$ and requiring  $\|\left(u(t),v(t)\right)\|_{H^{s+\frac{1}{2}}\times H^{s}}\le \frac1{c_s}$  we obtain
\begin{equation} \label{est:energy.100}
\frac{d}{dt} E_m^s(t)\lesssim \Big(1+\|\partial_x v(t)\|_{L^{\infty}}\Big) E_m^s(t),
\end{equation}

The proof of estimate \eqref{m-energy2} follows by by combining \eqref{m-energy1} and \eqref{est:energy.100}, while  estimate \eqref{m-energy3} is deduced by applying Gronwall's inequality.

\end{proof}

\subsection{Energy estimates for the differences of two solutions of \eqref{sbo}}\label{sec3.2}

Let $(u_1,v_1)$ and $(u_2,v_2)$ be two solutions of \eqref{sbo}. We define 
\begin{equation} \label{def:wz}
\begin{cases} 
w=u_1-u_2, & u=u_1+u_2  ,\\ 
z=v_1-v_2, & v=v_1+v_2 .
\end{cases}
\end{equation}
Then $(w,z)$ is solution of the system 
\begin{equation}\label{sbo:diff}
\begin{cases}
\mathrm{i}\partial_t w+\partial_x^2 w = \frac12 (vw+uz)+\frac14|w|^2w+\frac12|u|^2w+\frac14 u^2\overline{w},\\
\;\partial_tz-\hil \partial_x^2 z+ \frac12\partial_x(vz)=\partial_x\mathrm{Re}(\overline{u}w).
\end{cases}
\end{equation}

The aim of this subsection is to derive energy estimates on the difference of two solutions $(w,z)$. As in subsection \ref{sec3.1}, we define a modified energy for $(w,z)$. 

\begin{defn} Let $t\ge 0$. 

(i) \underline{Case $\sigma=0$}.
\begin{equation}\label{m-energy:diff:0}
\widetilde{E}_m^0(t):= \|w(t)\|_{L^2_{x}}^2+\|D^{\frac12}_{x}w(t)\|^2_{L^2_{x}} +\frac12\|z(t)\|_{L^2_{x}}^2+\mathrm{Re} \int_{\mathbb{R}} J_x^{-1}z\left(\overline{u}w\right)(t)\,dx. 
\end{equation}
where $J_x^{-1}$ is the Bessel potential of order $1$, defined in the notation.

(ii) \underline{Case $\sigma=s \ge \frac12$}.
\begin{equation}\label{m-energy:diff:s}
\begin{split}
\widetilde{E}_m^s(t):=& \|w(t)\|_{L^2_{x}}^2+\|D^{s+\frac12}_{x}w(t)\|^2_{L^2_{x}} +\|z(t)\|_{L^2_{x}}^2+\frac12\|D^{s}z(t)\|^2_{L^2_{x}}\\
&\hskip10pt+\mathrm{Re} \int_{\mathbb{R}} D^{s-\frac12}_xz(t)D^{s-\frac12}_x\left(\overline{u}w\right)\,dx. %{\color{red}(= \int_{\mathbb{R}} D^{s-\frac12}_xv(t)D^{s-\frac12}_x(|u(t)|^2)\,dx)}.
\end{split}
\end{equation}

\end{defn}
With this definition at hand, we can establish the a priori estimates we need in this work.

\begin{prop}\label{en_est:diff} Let $s>\frac12$ and $T>0$. There exists a positive constant $\widetilde{c}_s$ such that for any $(u_1,v_1), \, (u_2,v_2) \in C([0,T] : H^{s+\frac12}(\mathbb R)) \times H^s(\mathbb R))$ solutions of \eqref{sbo} satisfying 
\begin{equation} \label{m-energy:diff.0}
\|\left(u_i(t),v_i(t)\right)\|_{H^{s+\frac{1}{2}}_{x}\times H^{s}_{x}}\le \frac1{\widetilde{c}_s} , \quad i=1,2,
\end{equation}
the following estimates hold true. 
\begin{enumerate}
\item{} \underline{Coercivity}: for $\sigma=0$ or $\sigma=s>\frac12$
\begin{equation}\label{m-energy:diff.1}
\frac12 \Big(\|w(t)\|_{H^{\sigma+\frac12}_{x}}^2+\|z(t)\|_{H^{\sigma}_{x}}^2\Big)\le \widetilde{E}_m^{\sigma}(t)\le 
\frac32 \Big(\|w(t)\|_{H^{\sigma+\frac12}_{x}}^2+\|z(t)\|_{H^{\sigma}_{x}}^2\Big).
\end{equation}
\item{} \underline{$H^\frac12 \times L^2$-energy estimate}:
\begin{equation}\label{m-energy:diff.2}
\frac{d}{dt} \widetilde{E}_m^0(t) \lesssim \Big(1+\|\partial_x v_1(t)\|_{L^{\infty}_{x}}+\|\partial_x v_2(t)\|_{L^{\infty}_{x}}\Big) \widetilde{E}_m^0(t).
\end{equation}
\item{} \underline{$H^{s+\frac12} \times H^{s}$-energy estimate}:
\begin{equation}\label{m-energy:diff.3}
\frac{d}{dt} \widetilde{E}_m^s(t)  \lesssim \Big(1+\|\partial_x v_1(t)\|_{L^{\infty}_{x}}+\|\partial_x v_2(t)\|_{L^{\infty}_{x}}\Big) \widetilde{E}_m^s(t) +f_s(t)
\end{equation}
where $f_s=f_s(t)$ is defined by
\begin{equation} \label{m-energy:diff.3bis}
\begin{split}
f_s(t)& =\|D^s_x\partial_xv\|_{L^{\infty}}\|D^s_xz\|_{L^2}\|z\|_{L^2} +\|D^s_xv\|_{L^2}\|D^s_xz\|_{L^2}\|\partial_xz\|_{L^{\infty}} \\ 
&\quad +  \|D_x^{s+\frac12}v\|_{L^{\infty}}\|w\|_{L^2}\|D_x^{s+\frac12}w\|_{L^2}+\|D^{s+1}u\|_{L^{\infty}}\|w\|_{L^2}\|D_x^sz\|_{L^2}.
\end{split}
\end{equation}
 \end{enumerate}
 \end{prop}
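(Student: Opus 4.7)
The proof adapts the blueprint of Proposition~\ref{import} to the difference system \eqref{sbo:diff}; I outline the strategy in four short steps.

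\emph{Coercivity.} The cross term in $\widetilde{E}_m^s$ is controlled via Cauchy--Schwarz, the fractional Leibniz rule (Lemma~\ref{comm}), and the embedding $H^{s+\frac12}\hookrightarrow L^\infty$:
\[ \Big|\mathrm{Re}\int D^{s-\frac12}_x z\,D^{s-\frac12}_x(\overline uw)\,dx\Big|\lesssim \|z\|_{H^s}\bigl(\|u_1\|_{H^{s+\frac12}}+\|u_2\|_{H^{s+\frac12}}\bigr)\|w\|_{H^{s+\frac12}}. \]
Choosing $\widetilde c_s$ sufficiently large in \eqref{m-energy:diff.0}, this is absorbed into $\tfrac14(\|w\|^2_{H^{s+\frac12}}+\|z\|^2_{H^s})$, yielding \eqref{m-energy:diff.1}. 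The $\sigma=0$ case is analogous once $J_x^{-1}$ replaces the singular low-frequency Riesz potential.

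\emph{Top-order cancellation for \eqref{m-energy:diff.3}.} I would differentiate the four quadratic terms in $\widetilde{E}_m^s$, substitute \eqref{sbo:diff}, and use Lemmas~\ref{comm}, \ref{kpcomm}, \ref{propdmp} together with Sobolev embedding and \eqref{m-energy:diff.0} to estimate every resulting contribution, \emph{except} for two high-derivative bad terms of the shape
\[ 2\,\mathrm{Im}\int u\,D^{s+1}_x\overline w\,D^s_xz\,dx\quad\text{and}\quad\int D^s_xz\,\partial_xD^s_x\mathrm{Re}(\overline uw)\,dx, \]
arising respectively from $\tfrac12 uz$ in the $w$-equation and from $\partial_x\mathrm{Re}(\overline uw)$ in the $z$-equation. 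These are cancelled, modulo controllable commutators, by the $w_t$ and $z_t$ contributions to $\tfrac{d}{dt}\mathrm{Re}\int D^{s-\frac12}_xz\,D^{s-\frac12}_x(\overline uw)\,dx$, after substituting the dispersive parts of $w_t$ and $z_t$ and applying $\mathcal{H}\partial_x^2=\partial_xD_x$. This parallels the cancellations $\mathrm{III}_{1,1}+\mathrm{I}_2=0$ and $\mathrm{II}_{1,2}+\mathrm{III}_{2,2}=0$ of Proposition~\ref{import}.

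\emph{Origin of $f_s(t)$.} The remaining pieces of $\tfrac{d}{dt}$ of the cross term come from substituting the nonlinear parts of $z_t$ (namely $-\tfrac12\partial_x(vz)$) and of $\partial_t\overline u=\overline{(u_1)_t}+\overline{(u_2)_t}$ into that cross term, together with the commutator remainders left over from the previous step. Since these involve the \emph{sum} solutions $u,v$ (for which no smallness is assumed), their top-order parts cannot be absorbed into the energy. A commutator analysis via Lemma~\ref{kpcomm} together with H\"older in mixed norms produces exactly the four products in \eqref{m-energy:diff.3bis}. The main obstacle will be to enumerate all these new contributions and to verify that none produces a term of strictly higher order than those listed in $f_s$ or absorbable in $(1+\|\partial_xv_i\|_{L^\infty})\widetilde{E}_m^s$, without incurring any further derivative loss.

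\emph{The $\sigma=0$ estimate.} For \eqref{m-energy:diff.2} the absence of derivative loss at $H^{\frac12}\times L^2$ removes the source of any $f_0$ analog. The single high-derivative bad term from $\tfrac{d}{dt}\|D^{\frac12}_xw\|^2_{L^2}$ is cancelled by the $z_t$ contribution to $\tfrac{d}{dt}\mathrm{Re}\int J_x^{-1}z\,\overline uw\,dx$ once Lemma~\ref{Bessel:Riesz} is used to rewrite $J_x^{-1}\mathcal{H}\partial_x^2z=\partial_xz+T_1(D)z$ with $T_1(D)$ bounded on $L^2$; the surviving contributions are then controlled directly by $(1+\|\partial_xv_1\|_{L^\infty}+\|\partial_xv_2\|_{L^\infty})\widetilde{E}_m^0(t)$.
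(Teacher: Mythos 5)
Your overall strategy is the same as the paper's (differentiate the modified energy, use the cross term to cancel the non-estimable high-order interactions, collect the leftover terms with too many derivatives on $u,v$ into $f_s$), but two steps of your outline are wrong as stated. In the $\sigma=0$ case there are \emph{two} bad terms, not one: $\mathrm{Re}\int_{\mathbb R} z\,\partial_x(\overline{u}w)\,dx$, coming from $\tfrac{d}{dt}\tfrac12\|z\|_{L^2}^2$ via the source term of the second equation in \eqref{sbo:diff}, and $\mathrm{Im}\int_{\mathbb R}(D_x^{\frac12}\overline{w})\,u\,D_x^{\frac12}z\,dx$, coming from $\tfrac{d}{dt}\|D_x^{\frac12}w\|_{L^2}^2$ via the term $\tfrac12 uz$; the latter is genuinely out of reach since $\widetilde E_m^0$ does not control $\|D_x^{\frac12}z\|_{L^2}$. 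The pairing you propose fails: the $\partial_t z$ contribution to the cross term, rewritten with $J_x^{-1}\mathcal{H}\partial_x^2=\partial_x+T_1(D)$ and \eqref{Bessel:Riesz.1}, cancels the \emph{first} term, while the \emph{second} is cancelled by the $\partial_t w$ (Schr\"odinger) contribution, using \eqref{Bessel:Riesz.2} (the operator $T_2(D)=D_x^{\frac32}J_x^{-1}-D_x^{\frac12}$) together with a commutator bounded by Lemma \ref{kpcomm}. In addition, the $\partial_t\overline{u}$ and $\partial_t w$ contributions each produce a term of the form $\mathrm{Im}\int_{\mathbb R}(J^{-1}_xz)(\partial_x\overline{u})(\partial_xw)\,dx$, which is not controlled at the $H^{\frac12}\times L^2$ level and must be removed by the cancellation between these two pieces; your sketch does not account for this.

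The attribution and justification of $f_s$ also need correction. Three of the four terms in \eqref{m-energy:diff.3bis} do not come from the cross term at all: $\|D^s_x\partial_xv\|_{L^{\infty}}\|D^s_xz\|_{L^2}\|z\|_{L^2}$ and $\|D^s_xv\|_{L^2}\|D^s_xz\|_{L^2}\|\partial_xz\|_{L^{\infty}}$ arise from the transport term $-\tfrac12\partial_x(vz)$ in $\tfrac{d}{dt}\tfrac12\|D^s_xz\|_{L^2}^2$ (after commutator decomposition), and $\|D_x^{s+\frac12}v\|_{L^{\infty}}\|w\|_{L^2}\|D_x^{s+\frac12}w\|_{L^2}$ arises from $[D_x^{s+\frac12},v]w$ in $\tfrac{d}{dt}\|D_x^{s+\frac12}w\|_{L^2}^2$ via Lemma \ref{kpcomm}(II). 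Only $\|D^{s+1}u\|_{L^{\infty}}\|w\|_{L^2}\|D^s_xz\|_{L^2}$ comes from the cross term, and there from the \emph{linear} part $\partial_x^2\overline{u}$ of $\partial_t\overline{u}$, not from the nonlinear parts you invoke. Moreover, your stated reason (``no smallness is assumed'' for $u,v$) is incorrect: smallness of both solutions, hence of $u=u_1+u_2$ and $v=v_1+v_2$, is exactly hypothesis \eqref{m-energy:diff.0}. The real reason these terms must be segregated in $f_s$ is the derivative count: they involve norms such as $\|D^{s+1}u\|_{L^{\infty}}$ and $\|D_x^{s+\frac12}v\|_{L^{\infty}}$ that are not bounded by the $H^{s+\frac12}\times H^s$ regularity of the solutions, and they are only usable later, in the Bona--Smith argument, where one of the two solutions is a smooth regularization.
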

 
 \begin{rem} \label{rem:en_est_diff}
Proposition \ref{en_est:diff} also holds for solutions of the system \eqref{sbo_rescaled} with implicit constant independent of $0<\lambda \le 1$.
\end{rem}

 \begin{rem} The terms gathered in $f_s(t)$ cannot be estimated directly, but they have always more derivatives on the functions $u_i$, $v_i$ than on the terms for the differences $w$ and $z$. These terms will be handled with the Bona-Smith argument as in Proposition 2.18 in \cite{lps}.    \end{rem}

\begin{proof} The proof of \eqref{m-energy:diff.1} follows from the Cauchy-Schwarz inequality, the Leibniz rule and the Sobolev embedding. 
\smallskip

Next, we  prove \eqref{m-energy:diff.2}. Differentiating with respect to $t$ it follows that
\begin{equation}\label{energy:diff.4}
	\begin{split}
	\frac{d}{dt}\widetilde{E}_{m}^0&=\frac{1}{2}\frac{d}{dt}\left(\int_{\mathbb{R}}z^{2}\,d x\right)+\frac{d}{dt}\left(\int_{\mathbb{R}}|w|^{2}\,dx \right)\\
	&\quad +\frac{d}{dt}\left(\int_{\mathbb{R}}|D^{\frac12}_xw|^{2}\,dx \right)+\frac{d}{dt}\left(\mathrm{Re} \int_{\mathbb{R}}J^{-1}_xz\left(\overline{u}w\right)\, dx\right)\\
	&={\rm \widetilde{I}+\widetilde{II}+\widetilde{III}+\widetilde{IV}}.
	\end{split}
\end{equation}

We compute each term separately. After using the second equation in \eqref{sbo:diff} and integrating by parts, we find that 
\begin{displaymath}
{\rm \widetilde{I}}=\frac14 \int_{\mathbb R} (\partial_xv) z^2 dx +\mathrm{Re} \int_{\mathbb R} z \partial_x(\overline{u}w) dx={\rm \widetilde{I}_1}+{\rm \widetilde{I}_2 }.
\end{displaymath}
By using H\"older's inequality  ${\rm \widetilde{I}_1}$ is estimated easily by 
\begin{equation} \label{energy:diff.5}
|{\rm \widetilde{I}_1}| \lesssim \|\partial_xv\|_{L^{\infty}}\|z\|_{L^2}^2 .
\end{equation}
On the other hand, ${\rm \widetilde{I}_2}$ cannot be estimated directly and will instead be cancelled out by a term coming from ${\rm \widetilde{IV}}$. 

To deal with ${\rm \widetilde{II}}$, we use the first equation in \eqref{sbo:diff}, H\"older's inequality and the Soboelev embedding to deduce that 
\begin{equation} \label{energy:diff.6}
\begin{split}
|{\rm \widetilde{II}}|& \le  \left|\mathrm{Im} \int_{\mathbb R} \overline{w} u z dx \right| +\frac12 \left|\mathrm{Im} \int_{\mathbb R} u^2\overline{w}^2 dx \right| \\
&  \lesssim \|u\|_{H^s}\|w\|_{L^2}\|z\|_{L^2}+\|u\|_{H^s}^2\|w\|_{L^2}^2 .
\end{split}
\end{equation}

Next, we decompose ${\rm \widetilde{III}}$ as
\begin{displaymath}
\begin{split}
{\rm \widetilde{III}}&=\mathrm{Im} \int_{\mathbb R} D^{\frac12}_x\overline{w}D^{\frac12}_x(vw)dx+\mathrm{Im} \int_{\mathbb R} D^{\frac12}_x\overline{w}D^{\frac12}_x(uz)dx  +\frac12\mathrm{Im} \int_{\mathbb R} D^{\frac12}_x\overline{w}D^{\frac12}_x(|w|^2w)dx
\\ &\quad+
\mathrm{Im} \int_{\mathbb R} D^{\frac12}_x\overline{w}D^{\frac12}_x(|u|^2w)dx
+\frac12\mathrm{Im} \int_{\mathbb R} D^{\frac12}_x\overline{w}D^{\frac12}(u^2\overline{w})dx \\ 
&={\rm \widetilde{III}_1}+{\rm \widetilde{III}_2}+{\rm \widetilde{III}_3}+{\rm \widetilde{III}_4}+{\rm \widetilde{III}_5}.
\end{split}
\end{displaymath}
First it follows from the Cauchy-Schwarz inequality, the Kato-Ponce estimate in Lemma \ref{kpcomm}, and the Sobolev embedding (recalling $s>\frac12$) that 
\begin{equation} \label{energy:diff.7}
\begin{split}
| {\rm \widetilde{III}_1} |&=\left| \mathrm{Im} \int_{\mathbb R} D^{\frac12}_x\overline{w}[D^{\frac12}_x, v]w dx \right| \\ & \le \|D^{\frac12}_x w\|_{L^2}\|D^{\frac12}_x v\|_{L^{2_+}}\|w\|_{L^{\infty_-}} \lesssim \|v\|_{H^s}\|w\|_{H^{\frac12}}^2 .
\end{split}
\end{equation}
Secondly, we write ${\rm \widetilde{III}_2}$ as
\begin{equation*}
 {\rm \widetilde{III}_2} = \mathrm{Im} \int_{\mathbb R} (D^{\frac12}_x\overline{w})uD^{\frac12}_xzdx+
 \mathrm{Im} \int_{\mathbb R} D^{\frac12}_x\overline{w}[D^{\frac12}_x,u]zdx=  {\rm \widetilde{III}_{2,1}}+ {\rm \widetilde{III}_{2,2}}.
\end{equation*}
While ${\rm \widetilde{III}_{2,1}}$ cannot be handled directly and will be cancelled out by a term coming from ${\rm \widetilde{IV}}$, we use the Cauchy-Schwarz inequality, the Kato-Ponce estimate in Lemma \ref{kpcomm}, and the Sobolev embedding (recalling $s>\frac12$) to get that
\begin{equation} \label{energy:diff.8}
| {\rm \widetilde{III}_{2,2}} | \lesssim \|D^{\frac12}_x w\|_{L^2}\|D^{\frac12}u\|_{L^{\infty}}\|z\|_{L^2} 
\lesssim \|u\|_{H^{s+\frac12}} \|D^{\frac12}_x w\|_{L^2}\|z\|_{L^2}.
\end{equation}
Finally, the fractional Leibniz rule (see Lemma \ref{comm}) and the Sobolev embedding yield
\begin{equation} \label{energy:diff.9}
| {\rm \widetilde{III}_{3}} | \lesssim \|w\|_{L^{\infty}}^2 \|D^{\frac12}_xw\|_{L^2}^2 \lesssim \left(\|u_1\|_{H^{s+\frac12}}+\|u_2\|_{H^{s+\frac12}} \right)^2 \|w\|_{H^{\frac12}}^2,
\end{equation}
while the Kato-Ponce inequality (see Lemma \ref{kpcomm}) and the Sobolev embedding yield
\begin{equation} \label{energy:diff.10bis}
\begin{split}
| {\rm \widetilde{III}_{4}} | &= \left|\int_{\mathbb R}D^{\frac12}_x\overline{w}[D^{\frac12}_x,|u|^2]w dx \right|  
\lesssim \|D^{\frac12}(|u|^2)\|_{L^{\infty}} \|w\|_{H^{\frac12}}^2 
 \lesssim \|u\|_{H^{s+\frac12}}^2\|w\|_{H^{\frac12}}^2 .
\end{split}
\end{equation}
and
\begin{equation} \label{energy:diff.10}
\begin{split}
| {\rm \widetilde{III}_{5}} | &= \frac12\left| \int_{\mathbb R} u^2 (D^{\frac12}_x\overline{w} )^2 dx+\int_{\mathbb R}D^{\frac12}_x\overline{w}[D^{\frac12}_x,u^2]\overline{w} dx \right| \\ 
& \lesssim \left( \|u\|_{L^{\infty}}^2+\|D^{\frac12}(u^2)\|_{L^{\infty}}\right) \|w\|_{H^{\frac12}}^2 
\\ & \lesssim \|u\|_{H^{s+\frac12}}^2\|w\|_{H^{\frac12}}^2 .
\end{split}
\end{equation}

Finally, we deal with ${\rm \widetilde{IV}}$. We get after differentiating in time
\begin{equation*} 
\begin{split}
{\rm \widetilde{IV}}&=\mathrm{Re} \int_{\mathbb{R}} (J^{-1}_x\partial_tz) \overline{u}w\,dx
+\mathrm{Re} \int_{\mathbb{R}} (J^{-1}_x z) (\overline{\partial_t u})w\,dx+\mathrm{Re} \int_{\mathbb{R}} (J^{-1}_xz) \overline{u} (\partial_tw)\,dx
\\ & = {\rm \widetilde{IV}_1}+{\rm \widetilde{IV}_2}+{\rm \widetilde{IV}_3} .
\end{split}
\end{equation*}
By using the second equation in \eqref{sbo:diff}, we observe that
\begin{equation*} 
{\rm \widetilde{IV}_1}=\mathrm{Re} \int_{\mathbb{R}} (J^{-1}_x\mathcal H \partial_x^2z) \overline{u}w\,dx
-\frac12 \mathrm{Re} \int_{\mathbb{R}} J^{-1}_x \partial_x(vz) \overline{u}w\,dx={\rm \widetilde{IV}_{1,1}}+{\rm \widetilde{IV}_{1,2}},
\end{equation*}
where we used that $ \int_{\mathbb{R}} J^{-1}_x \partial_x \mathrm{Re}( \overline{u}w)  \mathrm{Re}( \overline{u}w)\,dx=0$. On the one hand, we rewrite ${\rm \widetilde{IV}_{1,1}}$ as
\begin{equation*} 
{\rm \widetilde{IV}_{1,1}}=\mathrm{Re} \int_{\mathbb{R}} \left((J^{-1}_x\mathcal H \partial_x^2-\partial_x)z\right) \overline{u}w\,dx+
\mathrm{Re} \int_{\mathbb{R}} (\partial_xz) \overline{u}w\,dx
={\rm \widetilde{IV}_{1,1,1}}+{\rm \widetilde{IV}_{1,1,2}} .
\end{equation*}
It follows from Lemma \ref{Bessel:Riesz} that 
\begin{equation}  \label{energy:diff.11}
|{\rm \widetilde{IV}_{1,1,1}}| \lesssim \|u\|_{H^s} \|z\|_{L^2}\|w\|_{L^2},
\end{equation}
and we use the identity 
\begin{equation} \label{energy:diff.12}
{\rm \widetilde{I}_{2}}+{\rm \widetilde{IV}_{1,1,2}}=0 
\end{equation}
to handle ${\rm \widetilde{IV}_{1,1,2}}$. On the other hand, we deduce from H\"older's inequality and the Sobolev embedding that 
\begin{equation}  \label{energy:diff.13}
|{\rm \widetilde{IV}_{1,2}}| \lesssim \|v\|_{L^{\infty}} \|z\|_{L^2}\|u\|_{L^{\infty}}\|w\|_{L^2} \lesssim \|v\|_{H^s} \|u\|_{H^s}\|z\|_{L^2}\|w\|_{L^2} .
\end{equation}

By using the first equation in \eqref{sbo}, we decompose ${\rm \widetilde{IV}_2}$ as
\begin{equation*}
\begin{split}
{\rm \widetilde{IV}_2}&=\mathrm{Im} \int_{\mathbb{R}} (J^{-1}_x z) (\partial_x^2\overline{ u})w\,dx
-\mathrm{Im} \int_{\mathbb{R}} (J^{-1}_x z)\left( \overline{u_1} v_1 +\overline{u_2} v_2\right)w\,dx \\ &
\quad -\mathrm{Im} \int_{\mathbb{R}} (J^{-1}_x z)\left( |u_1|^2\overline{u_1}+|u_2|^2\overline{u_2} \right)w\,dx \\ 
&={\rm \widetilde{IV}_{2,1}}+{\rm \widetilde{IV}_{2,2}}+{\rm \widetilde{IV}_{2,3}} .
\end{split}
\end{equation*}
Observe after integrating by parts that 
\begin{equation*}
\begin{split}
{\rm \widetilde{IV}_{2,1}} &=-\mathrm{Im} \int_{\mathbb{R}} (J^{-1}_x \partial_x z) (\partial_x\overline{u})w\,dx-\mathrm{Im} \int_{\mathbb{R}} (J^{-1}_x z) (\partial_x\overline{u})(\partial_x w)\,dx
\\ &  = {\rm \widetilde{IV}_{2,1,1}}+{\rm \widetilde{IV}_{2,1,2}} .
\end{split}
\end{equation*}
We deduce from H\"older's inequality and the Sobolev embedding (under the restriction $s>\frac12$) that
\begin{equation} \label{energy:diff.14}
|{\rm \widetilde{IV}_{2,1,1}}|  \lesssim \| J^{-1}_x \partial_x z \|_{L^2} \|\partial_xu\|_{L^{2_+}}\|w\|_{L^{\infty_-}} 
\lesssim \|u\|_{H^{s+\frac12}} \|z\|_{L^2}\|w\|_{H^{\frac12}} .
\end{equation}
The contribution ${\rm \widetilde{IV}_{2,1,2}}$ will be compensated by a term coming form ${\rm \widetilde{IV}_{3}}$.
Moreover, H\"older's inequality and the Sobolev embedding imply 
\begin{eqnarray} \label{energy:diff.15}
|{\rm \widetilde{IV}_{2,2}}| &\lesssim &
\left( \|u_1\|_{H^s} \|v_1\|_{L^2}+\|u_2\|_{H^s} \|v_2\|_{L^2}\right)\|z\|_{L^2}\|w\|_{L^2} ; \\
|{\rm \widetilde{IV}_{2,3}}| &  \lesssim &
\left( \|u_1\|_{H^s}+ \|u_2\|_{H^s}\right)^3\|z\|_{L^2}\|w\|_{L^2} .
\end{eqnarray}

Now, we use the first equation in \eqref{sbo:diff} to decompose ${\rm \widetilde{IV}_3}$ as 
\begin{equation*} 
\begin{split}
{\rm \widetilde{IV}_3}&=-\mathrm{Im} \int_{\mathbb{R}} (J^{-1}_xz) \overline{u} (\partial_x^2w)\,dx
+\frac12\mathrm{Im} \int_{\mathbb{R}} (J^{-1}_xz) \overline{u} (vw+uz)\,dx \\
& \quad + \mathrm{Im} \int_{\mathbb{R}} (J^{-1}_xz) \overline{u} (\frac14|w|^2w+\frac12|u|^2w+\frac14u^2\overline{w})\,dx \\ 
&= {\rm \widetilde{IV}_{3,1}}+{\rm \widetilde{IV}_{3,2}}+{\rm \widetilde{IV}_{3,3}} .
\end{split}
\end{equation*}
By integration by parts, we have
\begin{equation*} 
\begin{split}
{\rm \widetilde{IV}_{3,1}}&=\mathrm{Im} \int_{\mathbb{R}} (J^{-1}_x\partial_xz) \overline{u} (\partial_xw)\,dx
+\mathrm{Im} \int_{\mathbb{R}} (J^{-1}_xz) (\partial_x\overline{u})(\partial_xw)\,dx 
\\ & ={\rm \widetilde{IV}_{3,1,1}}+{\rm \widetilde{IV}_{3,1,2}}.
\end{split}
\end{equation*}
On the one hand, observe that 
\begin{equation} \label{energy:diff.15b}
{\rm \widetilde{IV}_{2,1,2}}+{\rm \widetilde{IV}_{3,1,2}}=0.
\end{equation}
On the other hand by using $D^1_x=\mathcal{H}\partial_x$, we decompose ${\rm \widetilde{IV}_{3,1,1}}$ as
\begin{equation*}
\begin{split}
{\rm \widetilde{IV}_{3,1,1}}%&=\mathrm{Im} \int_{\mathbb{R}} (D_x^{\frac32}J^{-1}_xz) \overline{u} (D_x^{\frac12}w)\,dx+\mathrm{Im} \int_{\mathbb{R}} [\mathcal{H}D_x^{\frac12},\overline{u} ]J^{-1}_x\partial_xz (D_x^{\frac12}w)\,dx
&=\mathrm{Im} \int_{\mathbb{R}} (D_x^{\frac12}z) \overline{u} (D_x^{\frac12}w)+\mathrm{Im} \int_{\mathbb{R}} (D_x^{\frac32}J^{-1}_x-D^{\frac12}_x)z \overline{u} (D_x^{\frac12}w) \\ 
&\quad +\mathrm{Im} \int_{\mathbb{R}} [\mathcal{H}D_x^{\frac12},\overline{u} ]J^{-1}_x\partial_xz (D_x^{\frac12}w)\,dx \\ 
&={\rm \widetilde{IV}_{3,1,1,1}}+{\rm \widetilde{IV}_{3,1,1,2}}+{\rm \widetilde{IV}_{3,1,1,3}} .
\end{split}
\end{equation*}
Then, we use the cancellation 
\begin{equation} \label{energy:diff.16}
{\rm \widetilde{III}_{2,1}}+{\rm \widetilde{IV}_{3,1,1,1}}=0.
\end{equation}
Moreover, estimate \eqref{Bessel:Riesz.2} implies 
\begin{equation} \label{energy:diff.17}
|{\rm \widetilde{IV}_{3,1,1,2}}| \lesssim  \|u\|_{L^{\infty}}\|z\|_{L^2}\|D^{\frac12}_xw\|_{L^2}.
\end{equation}
The Kato-Ponce commutator estimate in Lemma \ref{kpcomm} and the Sobolev embedding yield 
\begin{equation} \label{energy:diff.18}
|{\rm \widetilde{IV}_{3,1,1,3}}| \lesssim  \|D_x^{\frac12}u\|_{L^{\infty}}\|z\|_{L^2}\|D^{\frac12}_xw\|_{L^2} \lesssim \|u\|_{H^{s+\frac12}}\|z\|_{L^2}\|D^{\frac12}_xw\|_{L^2}.
\end{equation}

Finally, H\"older's inequality and the Sobolev embedding imply 
\begin{eqnarray} 
|{\rm \widetilde{IV}_{3,2}}| &\lesssim &
\|u\|_{H^s}\|z\|_{L^2}\left( \|v\|_{H^s} \|w\|_{L^2}+\|u\|_{H^s} \|z\|_{L^2}\right) ; \label{energy:diff.19}\\
|{\rm \widetilde{IV}_{3,3}}| &  \lesssim &
\left( \|u_1\|_{H^s}+ \|u_2\|_{H^s}\right)^3\|z\|_{L^2}\|w\|_{L^2} . \label{energy:diff.20}
\end{eqnarray}

Therefore, we conclude the proof of \eqref{m-energy:diff.2} gathering \eqref{energy:diff.4}--\eqref{energy:diff.20}.
\medskip

Now, we turn to the proof of \eqref{m-energy:diff.3}. Differentiating with respect to $t$ it follows that
\begin{equation}\label{energy:diff.21}
	\begin{split}
	\frac{d}{dt}\widetilde{E}_{m}^s&=\frac{d}{dt}\left(\int_{\mathbb{R}}z^{2}\,d x\right)+\frac{d}{dt}\left(\int_{\mathbb{R}}|w|^{2}\,dx \right)+\frac12\frac{d}{dt}\left(\int_{\mathbb{R}}(D^s_xz)^{2}\,d x\right)\\
	&\quad +\frac{d}{dt}\left(\int_{\mathbb{R}}|D^{s+\frac12}_xw|^{2}\,dx \right)+\frac{d}{dt}\left(\mathrm{Re} \int_{\mathbb{R}}D^{s-\frac12}_xzD^{s-\frac12}_x\left(\overline{u}w\right)\, dx\right)\\
	&= \widetilde{\mathcal{I}}+\widetilde{\mathcal{II}}+\widetilde{\mathcal{III}}+\widetilde{\mathcal{IV}}+\widetilde{\mathcal{V}}.
	\end{split}
\end{equation}

We argue as in the proof of \eqref{energy:diff.4} and compute each term on the right-hand side of \eqref{energy:diff.21}. After integrating by parts, we get that 
\begin{displaymath}
\widetilde{\mathcal{I}}=-\frac12 \int_{\mathbb R} (\partial_xv) z^2 dx +2\mathrm{Re} \int_{\mathbb R} z \partial_x(\overline{u}w) dx=\widetilde{\mathcal{I}}_1+\widetilde{\mathcal{I}}_2.
\end{displaymath}
Then, we deduce from H\"older's inequality that
\begin{equation} \label{energy:diff.22}
|\widetilde{\mathcal{I}}_1| \lesssim \|\partial_xv\|_{L^{\infty}}\|z\|_{L^2}^2  
\end{equation}
and
\begin{equation} \label{energy:diff.220}
\begin{split}
|\widetilde{\mathcal{I}}_2|  %& \le \left|\int_{\mathbb R} z (\partial_x\overline{u}) w dx \right|+ \left|\int_{\mathbb R} z \overline{u} \partial_xw dx \right| \\ &
&\lesssim \|z\|_{L^2}\|\partial_xu\|_{L^2}\|w\|_{L^{\infty}}+\|z\|_{L^2}\|u\|_{L^{\infty}}\|\partial_xw\|_{L^{2}} \\ &
\lesssim \|z\|_{L^2}\|u\|_{H^{s+\frac12}} \|w\|_{H^{s+\frac12}} ,
\end{split}
\end{equation}
where we used also the Sobolev embedding in the last estimate and recall that $s>\frac12$. 
Moreover, we find arguing as in \eqref{energy:diff.6} that
\begin{equation} \label{energy:diff.23}
|\widetilde{\mathcal{II}}|  \lesssim \|u\|_{H^s}\|w\|_{L^2}\|z\|_{L^2}+\|u\|_{H^s}^2\|w\|_{L^2}^2 .
\end{equation}

To deal with $\widetilde{\mathcal{III}}$, we use the second equation in \eqref{sbo:diff} and integrate by parts to obtain
\begin{equation*}
\widetilde{\mathcal{III}}=-\frac12 \int_{\mathbb{R}}D^s_xz D^s\partial_x(vz)\,d x+\mathrm{Re} \int_{\mathbb{R}}D_x^sz D^s_x\partial_x(\bar{u}w) \, dx=\widetilde{\mathcal{III}}_1+\widetilde{\mathcal{III}}_2 .
\end{equation*}
On the one hand, we decompose  $\widetilde{\mathcal{III}}_1$ as 
\begin{equation*}
\begin{split}
\widetilde{\mathcal{III}}_1&=-\frac12 \int_{\mathbb{R}}D^s_xz [D^s_x,z]\partial_xv\,d x-\frac12 \int_{\mathbb{R}}(D^s_xz) z D^s\partial_xv\,d x \\ & \quad -\frac12 \int_{\mathbb{R}}D^s_xz [D^s_x,v]\partial_xz\,d x-\frac12 \int_{\mathbb{R}}(D^s_xz) v D^s\partial_xz\,d x \\ 
&=\widetilde{\mathcal{III}}_{1,1}+\widetilde{\mathcal{III}}_{1,2}+\widetilde{\mathcal{III}}_{1,3}+\widetilde{\mathcal{III}}_{1,4}.
\end{split}
\end{equation*}
The H\"older inequality and Lemma \ref{kpcomm}  yield 
\begin{align}
\left| \widetilde{\mathcal{III}}_{1,1} \right|&\lesssim \|D^s_xz\|_{L^2} \left(\|\partial_xv\|_{L^{\infty}}\|D^sz\|_{L^2}+\|\partial_xz\|_{L^{\infty}}\|D^s_xv\|_{L^2} \right) ;  \label{energy:diff.24}\\ 
\left| \widetilde{\mathcal{III}}_{1,2} \right| &\lesssim \|D^s_xz\|_{L^2}\|z\|_{L^2}\|D^s_x\partial_xv\|_{L^{\infty}}; \label{energy:diff.25} \\ 
 \left| \widetilde{\mathcal{III}}_{1,3} \right| &\lesssim \|D^s_xz\|_{L^2} \left(\|\partial_xv\|_{L^{\infty}}\|D^sz\|_{L^2}+\|\partial_xz\|_{L^{\infty}}\|D^s_xv\|_{L^2} \right) ; \label{energy:diff.26} \\
 \left| \widetilde{\mathcal{III}}_{1,4} \right| &\lesssim \|\partial_xv\|_{L^{\infty}} \|D^s_xz\|_{L^2}^2.  \label{energy:diff.27}
\end{align}
On the other hand, $\widetilde{\mathcal{III}}_2$ cannot be estimated directly and will instead be cancelled out by a term coming from $\widetilde{\mathcal{V}}$. 

Next, we decompose $\widetilde{IV}$ as
\begin{displaymath}
\begin{split}
\widetilde{\mathcal{IV}}&=\mathrm{Im} \int_{\mathbb R} D^{s+\frac12}_x\overline{w}D^{s+\frac12}_x(vw) \, dx+\mathrm{Im} \int_{\mathbb R} D^{s+\frac12}_x\overline{w}D^{s+\frac12}_x(uz) \, dx  \\ &\quad+
\mathrm{Im} \int_{\mathbb R} D^{s+\frac12}_x\overline{w}D^{s+\frac12}_x\left(\left(\frac12|w|^2+|u|^2\right)w+\frac12u^2\overline{w}\right) \, dx \\ 
&=\widetilde{\mathcal{IV}}_1+\widetilde{\mathcal{IV}}_2+\widetilde{\mathcal{IV}}_3 . 
\end{split}
\end{displaymath}
Firstly, we observe from Lemma \ref{kpcomm} (ii) that 
\begin{equation}  \label{energy:diff.28} 
\begin{split}
\left| \widetilde{\mathcal{IV}}_1 \right| & = \left| \mathrm{Im} \int_{\mathbb R} D^{s+\frac12}_x\overline{w}[D^{s+\frac12}_x,v]w \, dx \right|  \\ & \lesssim \|D_x^{s+\frac12}w\|_{L^2} \left(\|D_x^{s+\frac12}v\|_{L^{\infty}}\|w\|_{L^2}+\|\partial_xv\|_{L^{\infty}}\|D^{s-\frac12}w\|_{L^2} \right).
\end{split}
\end{equation}
Secondly, we decompose $\widetilde{\mathcal{IV}}_2$ as 
\begin{equation*} 
\widetilde{\mathcal{IV}}_2=\mathrm{Im} \int_{\mathbb R} (D^{s+\frac12}_x\overline{w})uD^{s+\frac12}_xz \, dx+\mathrm{Im} \int_{\mathbb R} D^{s+\frac12}_x\overline{w}[D^{s+\frac12}_x,u]z \, dx = \widetilde{\mathcal{IV}}_{2,1}+\widetilde{\mathcal{IV}}_{2,2} .
\end{equation*}
While $\widetilde{\mathcal{IV}}_{2,1}$ will be canceled out by a term coming from $\widetilde{\mathcal{V}}$, we use Lemma \ref{kpcomm} (ii) and the Sobolev embedding (with $s>\frac12$) to get 
\begin{equation}  \label{energy:diff.29} 
\begin{split}
\left| \widetilde{\mathcal{IV}}_{2,2} \right| & \lesssim \|D_x^{s+\frac12}w\|_{L^2} \left( \|D_x^{s+\frac12}u\|_{L^2}\|z\|_{L^{\infty}}+\|\partial_xu\|_{L^{2_+}}\|D^{s-\frac12}z\|_{L^{\infty_-}} \right) \\ 
& \lesssim  \|D_x^{s+\frac12}u\|_{L^2}\|z\|_{H^s}\|D_x^{s+\frac12}w\|_{L^2} .
\end{split}
\end{equation}
Thirdly, using the fact that $H^{s+\frac12}$ is a Banach algebra, we deduce that 
\begin{equation}  \label{energy:diff.30}
\left| \widetilde{\mathcal{IV}}_{3} \right| \lesssim \left( \|D_x^{s+\frac12}u_1\|_{L^2}+ \|D_x^{s+\frac12}u_2\|_{L^2}\right)^2 \|D_x^{s+\frac12}w\|_{L^2}^2 .
\end{equation}

Now, we decompose $\widetilde{\mathcal{V}}$ as 
\begin{equation*} 
\begin{split}
 \widetilde{\mathcal{V}}&=\mathrm{Re} \int_{\mathbb{R}}D^{s-\frac12}_x\partial_tzD^{s-\frac12}_x\left(\overline{u}w\right)\, dx
 +\mathrm{Re} \int_{\mathbb{R}}D^{s-\frac12}_xzD^{s-\frac12}_x\left((\partial_t\overline{u})w\right)\, dx \\
 & \quad +\mathrm{Re} \int_{\mathbb{R}}D^{s-\frac12}_xzD^{s-\frac12}_x\left(\overline{u}\partial_tw\right)\, dx \\ 
 &=\widetilde{\mathcal{V}}_1+\widetilde{\mathcal{V}}_2+\widetilde{\mathcal{V}}_3 .
 \end{split}
\end{equation*}
Firstly, by using the second equation in \eqref{sbo:diff}, we write 
\begin{equation*} 
\begin{split}
 \widetilde{\mathcal{V}}_1&=\mathrm{Re} \int_{\mathbb{R}}D^{s-\frac12}_x\mathcal{H}\partial_x^2zD^{s-\frac12}_x\left(\overline{u}w\right)\, dx
-\frac12 \mathrm{Re} \int_{\mathbb{R}}D^{s-\frac12}_x\partial_x(vz)D^{s-\frac12}_x\left(\overline{u}w\right)\, dx
\\ 
 &=\widetilde{\mathcal{V}}_{1,1}+\widetilde{\mathcal{V}}_{1,2}  .
 \end{split}
\end{equation*}
On the one hand, by using $\mathcal{H}\partial_x=D^1_x$, we see that 
\begin{equation} \label{energy:diff.31}
 \widetilde{\mathcal{III}}_{2}+\widetilde{\mathcal{V}}_{1,1}=0 .
 \end{equation}
 On the other, by using that $H^s$ is a Banach algebra for $s>\frac12$, we have that 
 \begin{equation} \label{energy:diff.32}
\left|\widetilde{\mathcal{V}}_{1,1}\right| \lesssim \|vz\|_{H^s} \|\bar{u}w\|_{H^s} \lesssim   \|v\|_{H^s} \|u\|_{H^s}\|w\|_{H^s}\|z\|_{H^s} .
 \end{equation}
 Secondly, by using the first equation in \eqref{sbo}, we decompose ${\rm \widetilde{\mathcal{V}}_2}$ as
\begin{equation*}
\begin{split}
\widetilde{\mathcal{V}}_2&=\mathrm{Im} \int_{\mathbb{R}} D^{s-\frac12}_x z D^{s-\frac12}_x\left((\partial_x^2\overline{ u})w\right)\,dx
-\mathrm{Im} \int_{\mathbb{R}} D^{s-\frac12}_x zD^{s-\frac12}_x\left(\left( \overline{u_1} v_1 +\overline{u_2} v_2\right)w\right)\,dx \\ &
\quad -\mathrm{Im} \int_{\mathbb{R}} D^{s-\frac12}_x zD^{s-\frac12}_x\left(\left( |u_1|^2\overline{u_1}+|u_2|^2\overline{u_2} \right)w\right)\,dx \\ 
&=\widetilde{\mathcal{V}}_{2,1}+\widetilde{\mathcal{V}}_{2,2}+\widetilde{\mathcal{V}}_{2,3} .
\end{split}
\end{equation*}
By integrating by parts and using the identity $\partial_x=-\mathcal{H}D^1_x$, we rewrite $\widetilde{\mathcal{V}}_{2,1}$ as 
\begin{equation*}
\begin{split}
\widetilde{\mathcal{V}}_{2,1}&=-\mathrm{Im} \int_{\mathbb{R}} D^{s}_x z \mathcal{H}D^{s}_x\left((\partial_x\overline{ u})w\right)\,dx-\mathrm{Im} \int_{\mathbb{R}} D^{s-\frac12}_x z D^{s-\frac12}_x\left((\partial_x\overline{ u})(\partial_xw)\right)\,dx \\ 
&=\widetilde{\mathcal{V}}_{2,1,1}+\widetilde{\mathcal{V}}_{2,1,2}.
\end{split}
\end{equation*}
The contribution $\widetilde{\mathcal{V}}_{2,1,2}$ will be canceled out by a term coming from $\widetilde{\mathcal{V}}_{3}$. To handle the contribution $\widetilde{\mathcal{V}}_{2,1,1}$, we use $\partial_x=-\mathcal{H}D^{\frac12}_xD^{\frac12}_x$ and we decompose it further as 
\begin{equation*} 
\begin{split}
\widetilde{\mathcal{V}}_{2,1,1}&=-\mathrm{Im} \int_{\mathbb{R}} D^{s}_x zD^{s+\frac12}_x\left((D_x^{\frac12}\overline{ u})w\right)\,dx-\mathrm{Im} \int_{\mathbb{R}} D^{s}_x z\mathcal{H}D^{s}_x[\mathcal{H}D_x^{\frac12},w]D_x^{\frac12}\overline{ u}\,dx\\
&=-\mathrm{Im} \int_{\mathbb{R}} (D^{s}_x z) w D^{s+1}_x\overline{ u}\,dx
-\mathrm{Im} \int_{\mathbb{R}} D^{s}_x z[D^{s+\frac12}_x,w]D_x^{\frac12}\overline{ u}\,dx
 \\ 
&\quad -\mathrm{Im} \int_{\mathbb{R}} D^{s}_x z\mathcal{H}[\mathcal{H}D_x^{s+\frac12},w]D_x^{\frac12}\overline{ u}\,dx-\mathrm{Im} \int_{\mathbb{R}} D^{s}_x z\mathcal{H}[D_x^{s},w]\partial_x\overline{ u}\,dx \\ 
&=\widetilde{\mathcal{V}}_{2,1,1,1}+\widetilde{\mathcal{V}}_{2,1,1,2}+\widetilde{\mathcal{V}}_{2,1,1,3}+\widetilde{\mathcal{V}}_{2,1,1,4}. 
\end{split}
\end{equation*}
On the one hand, it follows from H\"older's inequality that 
\begin{equation}  \label{energy:diff.33}
\left| \widetilde{\mathcal{V}}_{2,1,1,1} \right|  \lesssim  \|D_x^sz\|_{L^2}\|w\|_{L^2}\|D^{s+1}u\|_{L^{\infty}} .
\end{equation}
On the other hand,  Lemma \ref{kpcomm} that 
\begin{equation}  \label{energy:diff.34}
\begin{split}
\left| \widetilde{\mathcal{V}}_{2,1,1,2} \right| +\left| \widetilde{\mathcal{V}}_{2,1,1,3} \right| &\lesssim \|D_x^sz\|_{L^2} \big( \|D_x^{s+\frac12}w\|_{L^2} \|D_x^{\frac12}u\|_{L^{\infty}}+ \|\partial_xw\|_{L^{2_+}} \|D_x^{s}u\|_{L^{\infty_-}}\big) \\ &
\lesssim  \|D_x^sz\|_{L^2}  \|u\|_{H^{s+\frac12}} \|w\|_{H^{s+\frac12}}  
\end{split}
\end{equation}
and 
\begin{equation}  \label{energy:diff.35}
\begin{split}
\left| \widetilde{\mathcal{V}}_{2,1,1,4} \right| &\lesssim \|D_x^sz\|_{L^2} \big( \|D_x^{s}w\|_{L^{\infty_-}} \|\partial_xu\|_{L^{2_+}}+ \|\partial_xw\|_{L^{2_+}} \|D_x^{s}u\|_{L^{\infty_-}}\big) \\ &
\lesssim  \|D_x^sz\|_{L^2}  \|u\|_{H^{s+\frac12}} \|w\|_{H^{s+\frac12}}  .
\end{split}
\end{equation}
Moreover, by using that $H^s$ is a Banach algebra for $s>\frac12$, we deduce that 
\begin{equation}  \label{energy:diff.36}
\begin{split}
\left| \widetilde{\mathcal{V}}_{2,2} \right| &\lesssim \|D_x^{s-\frac12}z\|_{L^2} \|\left( \overline{u_1} v_1 +\overline{u_2} v_2\right)w \|_{H^s} \\ &
\lesssim \left( \|u_1\|_{H^s}  \|v_1\|_{H^{s}}+\|u_2\|_{H^s}  \|v_2\|_{H^{s}} \right) \|z\|_{H^{s}} \|w\|_{H^{s}}  
\end{split}
\end{equation}
and 
\begin{equation}  \label{energy:diff.37}
\begin{split}
\left| \widetilde{\mathcal{V}}_{2,3} \right| &\lesssim \|D_x^{s-\frac12}z\|_{L^2} \|\left( |u_1|^2\overline{u_1}+|u_2|^2\overline{u_2} \right)w \|_{H^s} \\ &
\lesssim \left( \|u_1\|_{H^s}^3+\|u_2\|_{H^s}^3 \right) \|z\|_{H^{s}} \|w\|_{H^{s}}  .
\end{split}
\end{equation}

Finally, by using the first equation in \eqref{sbo:diff}, we decompose $ \widetilde{\mathcal{V}}_3$ as 
\begin{equation*} 
\begin{split}
 \widetilde{\mathcal{V}}_3&=-\mathrm{Im} \int_{\mathbb{R}}D^{s-\frac12}_xzD^{s-\frac12}_x\left(\overline{u}\partial_x^2w\right)\, dx
 +\frac12 \mathrm{Im} \int_{\mathbb{R}}D^{s-\frac12}_xzD^{s-\frac12}_x\left(\overline{u}(vw+uz)\right)\, dx \\ 
 & \quad + \mathrm{Im} \int_{\mathbb{R}}D^{s-\frac12}_xzD^{s-\frac12}_x\left(\overline{u}\big(\frac14|w|^2w+\frac12|u|^2w+\frac14 u^2\overline{w}\big)\right)\, dx\\
 &=\widetilde{\mathcal{V}}_{3,1}+\widetilde{\mathcal{V}}_{3,2}+\widetilde{\mathcal{V}}_{3,3} .
 \end{split}
\end{equation*}
By integrating by parts, we further decompose $\widetilde{\mathcal{V}}_{3,1}$ as
\begin{equation*} 
\begin{split}
 \widetilde{\mathcal{V}}_{3,1}&=\mathrm{Im} \int_{\mathbb{R}}D^{s-\frac12}_xzD^{s-\frac12}_x\left(\partial_x\overline{u}\partial_xw\right)\, dx
 +\mathrm{Im} \int_{\mathbb{R}}D^{s-\frac12}_x\partial_xzD^{s-\frac12}_x\left(\overline{u}\partial_xw\right)\, dx \\ 
 &=\widetilde{\mathcal{V}}_{3,1,1}+\widetilde{\mathcal{V}}_{3,1,2} .
 \end{split}
\end{equation*}
On the one hand, we have the cancellation 
\begin{equation} \label{energy:diff.38}
 \widetilde{\mathcal{V}}_{2,1,2}+\widetilde{\mathcal{V}}_{3,1,1}=0 .
 \end{equation}
 On the other hand, by using the identity $\partial_x=-\mathcal{H}D_x^1$, we rewrite $\widetilde{\mathcal{V}}_{3,1,2}$ as
\begin{equation*} 
\begin{split}
 \widetilde{\mathcal{V}}_{3,1,2}&=-\mathrm{Im} \int_{\mathbb{R}}\mathcal{H}D^{s+\frac12}_xzD^{s-\frac12}_x\left(\overline{u}\partial_xw\right)\, dx
  \\ &=\mathrm{Im} \int_{\mathbb{R}}(D^{s+\frac12}_xz) \overline{u} D^{s+\frac12}_xw\, dx+
  \mathrm{Im} \int_{\mathbb{R}}(D^{s+\frac12}_xz) [\mathcal{H}D_x^{s-\frac12},\overline{u}] \partial_xw\, dx\\
 &=\widetilde{\mathcal{V}}_{3,1,2,1}+\widetilde{\mathcal{V}}_{3,1,2,2} 
 \end{split}
\end{equation*}
and we use the cancellation
\begin{equation} \label{energy:diff.39}
 \widetilde{\mathcal{IV}}_{2,1}+\widetilde{\mathcal{V}}_{3,1,2,1}=0 
 \end{equation}
 to deal with the first term. To handle the second term, we rewrite it as 
\begin{equation*} 
\begin{split}
 \widetilde{\mathcal{V}}_{3,1,2,2}&=\mathrm{Im} \int_{\mathbb{R}}D^{s}_xz[\mathcal{H}D^{s}_x,\overline{u}]\partial_xw\, dx 
 -\mathrm{Im} \int_{\mathbb{R}}D^{s}_xz[D^{\frac12}_x,\overline{u}]D_x^{s+\frac12}w\, dx\\
   &=\widetilde{\mathcal{V}}_{3,1,2,2,1}+\widetilde{\mathcal{V}}_{3,1,2,2,2} , 
 \end{split}
\end{equation*}
and we deduce from Lemma \ref{kpcomm} that 
\begin{equation}  \label{energy:diff.40}
\begin{split}
\left| \widetilde{\mathcal{V}}_{3,1,2,2,1} \right| &\lesssim \|D_x^sz\|_{L^2} \big( \|D_x^{s}u\|_{L^{\infty_-}} \|\partial_xw\|_{L^{2_+}}+ \|\partial_xu\|_{L^{2_+}} \|D_x^{s}w\|_{L^{\infty_-}}\big) \\ &
\lesssim  \|D_x^sz\|_{L^2}  \|u\|_{H^{s+\frac12}} \|w\|_{H^{s+\frac12}}  
\end{split}
\end{equation}
and
\begin{equation}  \label{energy:diff.41}
\begin{split}
\left| \widetilde{\mathcal{V}}_{3,1,2,2,2} \right| &\lesssim \|D_x^sz\|_{L^2}  \|D_x^{\frac12}u\|_{L^{\infty}} \|D^{s+\frac12}w\|_{L^{2}}
\lesssim  \|D_x^sz\|_{L^2}  \|u\|_{H^{s+\frac12}} \|w\|_{H^{s+\frac12}}  .
\end{split}
\end{equation} 
Moreover, by using that $H^s$ is a Banach algebra for $s>\frac12$, we deduce that 
\begin{equation}  \label{energy:diff.42}
\begin{split}
\left| \widetilde{\mathcal{V}}_{3,2} \right| &\lesssim \|D_x^{s-\frac12}z\|_{L^2} \|\left(\overline{u}(vw+uz\right)\|_{H^s} \\ &
\lesssim \|u\|_{H^s}  \|v\|_{H^{s}}  \|z\|_{H^{s}} \|w\|_{H^{s}}+\|u\|_{H^s}^2 \|z\|_{H^{s}}^2   
\end{split}
\end{equation}
and 
\begin{equation}  \label{energy:diff.43}
\begin{split}
\left| \widetilde{\mathcal{V}}_{3,3} \right| &\lesssim \|D_x^{s-\frac12}z\|_{L^2} \|\left(\overline{u}\big(\frac14|w|^2w+\frac12|u|^2w+\frac14 u^2\overline{w}\big)\right)\|_{H^s} \\ &
\lesssim \left( \|u_1\|_{H^s}+\|u_2\|_{H^s} \right)^3 \|z\|_{H^{s}} \|w\|_{H^{s}}  .
\end{split}
\end{equation}

Therefore, we conclude the proof of estimate \eqref{m-energy:diff.3} by gathering \eqref{energy:diff.21}-\eqref{energy:diff.43}.
This finishes the proof of Proposition \ref{en_est:diff}.
\end{proof}

\section{Proof of Theorem \ref{teo1}}

\subsection{Refined Strichartz estimates}
%One of the main ingredients in our analysis is to derive the  so-called  refined Strichartz estimates adapted to solutions
%to the  Schr\"{o}dinger-Benjamin-Ono system.  

%We first remind the classical Strichartz estimates for the solutions of the
%linear equations associated to the Benjamin-Ono equation.
%
%\begin{prop}\label{strichartz syst}
%	Let $p,q$  satisfy $$\frac{2}{q}+\frac{1}{r}=\frac{1}{2}\quad \mbox{with}\quad 2\leq p,q\leq+\infty.$$    Then
%%	\begin{equation*}
%%		\|e^{it\partial_{x}^{2}}u_{0}\|_{L^{q}_{t}L^{p}_{x}}\lesssim \|u_{0}\|_{L^{2}_{x}}
%%	\end{equation*}
%%	and
%	\begin{equation*}
%			\|e^{-t \mathcal{H}_x\partial_x^2}v_{0}\|_{L^{q}_{t}L^{p}_{x}}\lesssim \|v_{0}\|_{L^{2}_{x}}.
%	\end{equation*}
%\end{prop}
%
%\begin{proof} See Kenig, ponce and Vega \cite{osc-KPV} for a proof of the estimates above.
%\end{proof} 
%%We shall also recall some maximal function estimate.
%%\begin{lem}\label{maximal1}
%%	Assume $v_{0}\in H^{\frac{1}{2}+\eta}(\mathbb{R})$ for some $\eta>0.$ Then
%%	\begin{equation*}
%%		\|S(t)v_{0}\|_{L^{2}_{x}L^{\infty}_{T}}\leq\|S(t)v_{0}\|_{l^{2}_{j}\left(\mathbb{Z}: L^{\infty}_{T}L^{\infty}_{x}\left([j,j+1)\right)\right)}\lesssim(1+T)\|v_{0}\|_{H^{\eta+\frac{1}{2}}_{x}}.
%%	\end{equation*}
%%\begin{proof}
%%See Kenig, Ponce and Vega \cite{KPV-energy}.
%%\end{proof}
%%\end{lem}
%
%
%%\section{Refined Strichartz estimates}

One of the main ingredients in our analysis is a  refined Strichartz estimates for solutions
of the linear non-homogeneous Benjamin-Ono equation.  This estimate is proved by Kenig and Koenig in Proposition 2.8 in \cite{kk} and is based on previous ideas by Koch and Tzvetkov in \cite{kt}. 
\begin{prop}\label{propo1}
	Let   $s>\frac12$, $\delta \in[0,1]$ and $0<T \le 1$. Assume that $v \in C([0,T]: H^s(\mathbb{R}))$  is a solution  to the equation
	\begin{equation}\label{eq3}
		\partial_{t}v-\mathcal{H}\partial_{x}^{2}v= F .
	\end{equation}
Then, for any $\epsilon>0,$
%\begin{equation}\label{e4}
%	\begin{split}
%			\|\partial_{x}u\|_{L^{2}_{T}L^{2}_{x}}
%			&\leq c_{1} T^{\kappa_{1}}\|D_{x}^{1+\frac{\delta_{1}}{4}+\epsilon_{1}} u\|_{L^{\infty}_{T}L^{2}_{x}}+c_{1}T^{\kappa_{2}}\|D_{x}^{1-\frac{3\delta_{1}}{4}+\epsilon_{1}} F_{1}\|_{L^{2}_{T}L^{2}_{x}}\\
%			&\quad +c_{1}T^{\kappa_{1}} \|u\|_{L^{\infty}_{T}L^{2}_{x}}
%			 + c_{1}T^{\kappa_{2}}\|F_{1}\|_{L^{2}_{T}L^{2}_{x}}
%	\end{split}
%\end{equation}
%and 
\begin{equation}\label{e5}
\begin{split}
\|\partial_{x}v\|_{L^{2}_{T}L^{\infty}_{x}}
&\lesssim T^{\frac12}\|J_{x}^{1+\frac{\delta}4+\epsilon} v\|_{L^{\infty}_{T}L^{2}_{x}}
+ \|J_{x}^{1-\frac{3\delta}{4}+\epsilon} F\|_{L^{2}_{T}L^{2}_{x}} .
\end{split}
\end{equation} 
\end{prop}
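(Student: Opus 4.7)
The plan is to adapt the Koch--Tzvetkov / Kenig--Koenig strategy: Littlewood--Paley decompose $v$, split the time interval $[0,T]$ on a scale depending on the dyadic frequency $N$, and apply a Schrödinger-type Strichartz estimate on each short subinterval. Write $v=P_{\le 1}v+\sum_{N\ge 2}P_Nv$ with $P_N$ localizing in frequency to $|\xi|\sim N$; the low-frequency piece is harmless and controlled by Bernstein's inequality and the $L^\infty_T L^2_x$ norm.

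For each dyadic $N\gg 1$, partition $[0,T]$ into $\sim TN^{\delta}$ intervals $I_j=[t_j,t_{j+1}]$ of length $|I_j|\sim N^{-\delta}$. Duhamel's formula for \eqref{eq3} on $I_j$ gives
\begin{equation*}
\partial_x P_Nv(t)=e^{(t-t_j)\mathcal{H}\partial_x^2}\partial_xP_Nv(t_j)+\int_{t_j}^t e^{(t-\tau)\mathcal{H}\partial_x^2}\partial_xP_NF(\tau)\,d\tau,\qquad t\in I_j.
\end{equation*}
Since the BO symbol $\xi|\xi|$ equals $\pm\xi^2$ on each half-line $\{\pm\xi>0\}$, the propagator $e^{t\mathcal{H}\partial_x^2}$ satisfies the same 1D Schrödinger Strichartz estimate $\|e^{t\mathcal{H}\partial_x^2}f\|_{L^4_tL^\infty_x}\lesssim\|f\|_{L^2_x}$. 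Applying this to the free evolution, using a Christ--Kiselev / Minkowski argument for the retarded Duhamel term, and invoking Bernstein's inequality $\|\partial_xP_Ng\|_{L^2}\lesssim N\|P_Ng\|_{L^2}$, we get
\begin{equation*}
\|\partial_xP_Nv\|_{L^4_{I_j}L^\infty_x}\lesssim N\|P_Nv\|_{L^\infty_TL^2_x}+N\|P_NF\|_{L^1_{I_j}L^2_x}.
\end{equation*}

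The remainder is dyadic bookkeeping. Hölder's inequality in time gives $\|\cdot\|_{L^2_{I_j}}\le|I_j|^{1/4}\|\cdot\|_{L^4_{I_j}}=N^{-\delta/4}\|\cdot\|_{L^4_{I_j}}$, and Cauchy--Schwarz converts $\|P_NF\|_{L^1_{I_j}L^2_x}$ into $N^{-\delta/2}\|P_NF\|_{L^2_{I_j}L^2_x}$. Squaring, summing over the $\sim TN^{\delta}$ subintervals, and extracting a square root yields
\begin{equation*}
\|\partial_xP_Nv\|_{L^2_TL^\infty_x}\lesssim T^{1/2}N^{1+\delta/4}\|P_Nv\|_{L^\infty_TL^2_x}+N^{1-3\delta/4}\|P_NF\|_{L^2_TL^2_x}.
\end{equation*}
A final Minkowski summation over $N$ combined with Cauchy--Schwarz in $N$ (absorbing $\sum_N N^{-2\epsilon}<\infty$) then produces the Bessel-potential bound \eqref{e5} with the $+\epsilon$ loss.

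The main obstacle is justifying that the 1D Schrödinger $L^4_tL^\infty_x$ Strichartz estimate transfers, uniformly in $N$, to frequency-localized pieces of the Benjamin--Ono semigroup; this is the point that forces the low/high-frequency split at the outset. On the region $|\xi|\sim N$ with $N\gg 1$ the phase $\xi|\xi|$ has non-vanishing second derivative, so a standard $TT^*$/stationary phase analysis yields the dispersive decay $\|e^{t\mathcal{H}\partial_x^2}P_Nf\|_{L^\infty_x}\lesssim|t|^{-1/2}\|P_Nf\|_{L^1_x}$, from which the Strichartz estimate follows exactly as for the free Schrödinger group. Once this transfer is in hand, the rest of the argument reduces to the time-splitting scheme and dyadic summation described above.
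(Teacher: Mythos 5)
Your proposal is correct and follows essentially the same route as the paper, which does not reprove the estimate but cites Proposition 2.8 of Kenig--Koenig \cite{kk}: Littlewood--Paley decomposition, chopping $[0,T]$ into $\sim TN^{\delta}$ intervals of length $N^{-\delta}$, the $L^4_tL^\infty_x$ Strichartz bound for $e^{t\mathcal{H}\partial_x^2}$ (valid since $|\partial_\xi^2(\xi|\xi|)|=2$ away from the origin, so no frequency localization is even needed), Bernstein, H\"older in time, and dyadic summation with the $\epsilon$-loss. One cosmetic caveat: for the $v$-term the final summation should not be done by Cauchy--Schwarz in $N$ (that would require interchanging $\ell^2_N$ with $L^\infty_T$ in the unfavorable direction); instead bound each dyadic piece by $N^{1+\frac{\delta}{4}}\|P_Nv\|_{L^\infty_TL^2_x}\lesssim N^{-\epsilon}\|J_x^{1+\frac{\delta}{4}+\epsilon}v\|_{L^\infty_TL^2_x}$ and sum the geometric series, which is the standard fix and changes nothing else in your argument.
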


By relying on this estimate, we can control the term $\|\partial_xv\|_{L^1_TL^{\infty}_x}$ appearing in the energy estimates. 
\begin{lem} \label{est:dxv}
Let $s>\frac54$ and $0<T \le 1$. There exists $\kappa_{2,s}>0$ such that for any solution $(u,v) \in C([0,T]: H^{s+\frac12}(\mathbb{R}) \times H^s(\mathbb{R}))$ of \eqref{sbo}, we have 
\begin{equation} \label{est:dxv.1}
\|\partial_xv\|_{L^1_TL^{\infty}_x} \le \kappa_{2,s} T \left(\|v\|_{L^{\infty}_TH^s_x}+\|v\|_{L^{\infty}_TH^s_x}^2+\|u\|_{L^{\infty}_TH^s_x}^2 \right) .
\end{equation}
\end{lem}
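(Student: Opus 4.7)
The plan is to apply the refined Strichartz estimate from Proposition \ref{propo1} to the second equation of \eqref{sbo}, after rewriting its nonlinearity in divergence form to save one derivative. Since $\rho = 1$ in this section, the equation reads
\begin{equation*}
\partial_t v - \mathcal{H}\partial_x^2 v = F, \qquad F := -\tfrac12 \partial_x(v^2) + \partial_x(|u|^2).
\end{equation*}

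First, I would pass from $L^1_T$ to $L^2_T$ via Cauchy--Schwarz, using $0<T\le 1$:
\begin{equation*}
\|\partial_x v\|_{L^1_T L^\infty_x} \le T^{1/2} \|\partial_x v\|_{L^2_T L^\infty_x}.
\end{equation*}
Then I would choose the parameter $\delta = 1$ in Proposition \ref{propo1}. Since $s > 5/4$, one can pick $\epsilon > 0$ small so that $1 + \delta/4 + \epsilon = 5/4 + \epsilon \le s$ and also $1 - 3\delta/4 + \epsilon = 1/4 + \epsilon \le s - 1$. Plugging this into \eqref{e5} yields
\begin{equation*}
\|\partial_x v\|_{L^2_T L^\infty_x} \lesssim T^{1/2} \|v\|_{L^\infty_T H^s_x} + \|J_x^{1/4+\epsilon} F\|_{L^2_T L^2_x}.
\end{equation*}

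To bound the inhomogeneous term, I would estimate $F$ in $L^\infty_T H^{1/4+\epsilon}_x$ and pay another factor $T^{1/2}$ from the integration in time. Writing the derivative $\partial_x$ appearing in $F$ as a loss of one full derivative, the remaining object is of the form $J_x^{5/4+\epsilon}(v^2)$ and $J_x^{5/4+\epsilon}(|u|^2)$. Since $5/4 + \epsilon \le s$ and $H^s(\mathbb{R})$ is a Banach algebra for $s > 1/2$, this gives
\begin{equation*}
\|J_x^{1/4+\epsilon} F\|_{L^2_T L^2_x} \lesssim T^{1/2} \bigl( \|v\|_{L^\infty_T H^s_x}^2 + \|u\|_{L^\infty_T H^s_x}^2 \bigr).
\end{equation*}
Combining the two preceding displays and multiplying by $T^{1/2}$ produces the claimed estimate, with the overall power of $T$ equal to $T^{1/2} \cdot T^{1/2} = T$.

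The main thing to be careful about is the choice of indices in Proposition \ref{propo1}: the threshold $s > 5/4$ is exactly what is needed so that $\delta = 1$ is an admissible choice (maximizing the gain of $3\delta/4$ derivatives on $F$ at the price of a loss of $\delta/4$ derivatives on the linear part), while still leaving some room $\epsilon > 0$ to absorb the logarithmic losses coming from the Sobolev embedding used in Proposition \ref{propo1}. Beyond that, the argument is purely mechanical: the $u$-contribution in $F$ is controlled by $\|u\|_{H^s}^2$ (we even have $\|u\|_{H^{s+1/2}}$ available, so there is slack here), and the $v$-contribution is handled by writing $v\partial_x v = \tfrac12 \partial_x(v^2)$ so that the estimate can be closed at the level $H^s$ for $v$.
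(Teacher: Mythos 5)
Your proof is correct and follows essentially the same route as the paper: Hölder (Cauchy--Schwarz) in time, the refined Strichartz estimate \eqref{e5} with $\delta=1$ applied to $F=-\tfrac12\partial_x(v^2)+\partial_x(|u|^2)$, and the Banach algebra property of $H^s(\mathbb R)$ to bound the quadratic terms, with the choice of $\epsilon$ governed by $s>\tfrac54$ exactly as in the paper. The bookkeeping of the powers of $T$ (a factor $T^{1/2}$ from each application of Hölder in time, giving $T$ overall) is also the intended one.
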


\begin{proof}
The proof of estimate \eqref{est:dxv.1} follows directly by applying the H\"older inequality in time, estimate \eqref{e5} with $\delta=1$ and $F=-\frac12\partial_x(v^2)+\partial_x(|u|^2)$, and the fact that $H^s(\mathbb R)$ is a Banach algebra for $s>\frac12$.
\end{proof}

%%%%%%%%%%%%%%%%%%%%%%%%%%%%%%%%%%%%
\subsection{Well-posedness for smooth initial data}
As far as we know, there does not exist a well-posedness theory for the system \eqref{sbo} when $\rho \neq 0$. The next result is based on the energy estimates derived in Section \ref{Sec:EnEs}. 

\begin{thm}\label{lwpsmoothsol}
Let $s>\frac32$. Then, for any $(u_0, v_0)\in H^{s+\frac12}(\R)\times H^s(\R)$,  there exist a positive time 
$T=T(\|(u_0, v_0)\|_{H^{s+\frac12}\times H^s})$ and a unique maximal solution $(u,v)$ of the IVP \eqref{sbo} in $C\big([0,T^ *): H^{s+\frac12}(\R)\times H^s(\R) \big)$ with $T^*>T(\|(u_0, v_0)\|_{H^{s+\frac12}\times H^s})$. 
If the maximal time of existence $T^*$ is finite, then 
\begin{equation*}
\lim_{t \nearrow T^*} \|(u(t),v(t))\|_{H^{s+\frac12} \times H^s}=+\infty.
\end{equation*}

Moreover, for any $0<T'<T$, there exists a neighborhood $\mathcal{U}$ of $(u_0,v_0)$ in $H^{s+\frac12}(\mathbb R) \times H^s(\mathbb R)$ such that the flow map data-to-solution 
\begin{equation*}
S: \mathcal{U} \rightarrow C\big([0,T]: H^{s+\frac12}_{x}(\mathbb{R})\times H^{s}_{x}(\mathbb{R})\big) \, , (\tilde{u}_0,\tilde{v}_0) \mapsto (\tilde{u},\tilde{v})
\end{equation*}
is continous. \end{thm}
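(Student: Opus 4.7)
\medskip

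\noindent\textbf{Proof strategy for Theorem \ref{lwpsmoothsol}.}

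The plan is to exploit the fact that, for $s>\frac32$, the Sobolev embedding $H^s(\mathbb R)\hookrightarrow W^{1,\infty}(\mathbb R)$ gives $\|\partial_x v\|_{L^{\infty}_x}\lesssim \|v\|_{H^s_x}$, so the refined Strichartz estimate of Proposition \ref{propo1} is not yet needed: the Gronwall-type inequality \eqref{m-energy2} already closes. First, to use the coercivity \eqref{m-energy1} of the modified energy $E_m^s$ (which is only valid under a smallness assumption of the form $\|(u,v)\|_{H^{s+\frac12}\times H^s}\le 1/c_s$), we invoke the scaling \eqref{sbo_rescaled}--\eqref{sbo-scal2}: after choosing $\lambda\in(0,1]$ small enough, it suffices to construct a solution of the rescaled system \eqref{sbo-scale} with initial datum of norm less than $\delta=1/(2c_s)$, then to undo the rescaling, with the time of existence shrinking at worst like $\lambda^2$.

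For existence, I would regularize the initial data by frequency truncation, setting $u_{0,n}=P_{\le n}u_0$ and $v_{0,n}=P_{\le n}v_0$, and solve the resulting system in $H^{\infty}$ by a standard Picard iteration (the nonlinearities become locally Lipschitz on any ball of $H^\infty$ after frequency truncation of all nonlinear terms, so existence of a smooth solution $(u_n,v_n)$ on a small time interval is immediate). Applying Proposition \ref{import} to $(u_n,v_n)$, together with the $H^s$-control of $\|\partial_x v_n\|_{L^\infty_x}$ coming from Sobolev embedding (since $s>\frac32$), one obtains
\begin{equation*}
\frac{d}{dt} E_m^s(t) \lesssim \bigl(1+E_m^s(t)^{1/2}\bigr)E_m^s(t),
\end{equation*}
which gives a time $T=T(\|(u_0,v_0)\|_{H^{s+\frac12}\times H^s})>0$ on which the sequence $(u_n,v_n)$ is uniformly bounded in $L^\infty_T H^{s+\frac12}_x\times L^\infty_T H^s_x$. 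Combined with the equation itself, this yields equicontinuity in time at a lower Sobolev level, and an Ascoli/compactness argument (or the difference estimates below) produces a limit $(u,v)\in C([0,T]: H^{s+\frac12}\times H^s)$ solving \eqref{sbo}; persistence of regularity and $C^0$-in-time values are obtained by a standard approximation + weak-continuity argument.

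Uniqueness and the continuity of the flow on bounded sets of $H^{s+\frac12}\times H^s$ follow from the $H^{\frac12}\times L^2$ difference estimate \eqref{m-energy:diff.2}: if $(u_1,v_1),(u_2,v_2)$ are two solutions sharing the same data, after a fresh rescaling (so that \eqref{m-energy:diff.0} holds for both) Gronwall applied to $\widetilde E_m^0$ gives $(w,z)\equiv 0$, since the factor $\|\partial_x v_i\|_{L^{\infty}_x}$ is again controlled by $\|v_i\|_{H^s}$. The blow-up alternative is then the classical consequence of the quantitative Gronwall bound: the solution can be continued as long as the $H^{s+\frac12}\times H^s$-norm remains finite, so if the maximal time $T^*$ is finite the norm must blow up.

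The main technical step is \emph{continuous dependence} at the endpoint regularity, which does not follow directly from the difference estimate \eqref{m-energy:diff.3} because the remainder $f_s(t)$ contains the top-order norms of $u_i,v_i$ acting on the differences. I would settle this with the Bona--Smith argument, exactly as in Proposition 2.18 of \cite{lps}: given $(u_0,v_0)$, approximate it by $(u_{0,\eps},v_{0,\eps})=(\rho_\eps\ast u_0,\rho_\eps\ast v_0)$; combine the uniform $H^{s+\frac12}\times H^s$ bound on the corresponding solutions $(u_\eps,v_\eps)$ with the gain $\|(u_{0,\eps}-u_0,v_{0,\eps}-v_0)\|_{H^{s+\frac12-\sigma}\times H^{s-\sigma}}=o(\eps^\sigma)$, and interpolate between the low-regularity convergence (provided by \eqref{m-energy:diff.2}) and the high-regularity boundedness to absorb $f_s(t)$ and close the estimate. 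Choosing $\eps=\eps(n,m)$ carefully then shows that the map $(u_0,v_0)\mapsto (u,v)$ is continuous from $H^{s+\frac12}\times H^s$ into $C([0,T']: H^{s+\frac12}\times H^s)$ for every $T'<T$. This Bona--Smith step, together with the careful bookkeeping of the $f_s$-remainder, is the only delicate point; the rest is an essentially routine application of the energy machinery of Section~\ref{Sec:EnEs}.
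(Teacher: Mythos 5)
Your overall strategy matches the paper's: rescale via \eqref{sbo_rescaled} so that the smallness condition needed for coercivity of the modified energy holds (with the threshold independent of $\lambda$, as in Remarks \ref{rem:en_est} and \ref{rem:en_est_diff}), obtain uniform bounds from Propositions \ref{import} and \ref{en_est:diff} — and you correctly note that for $s>\frac32$ the factor $\|\partial_x v\|_{L^\infty_x}$ is controlled by Sobolev embedding, so no Strichartz input is needed at this stage — then pass to the limit by compactness, prove uniqueness through the $H^{\frac12}\times L^2$ difference estimate, and obtain persistence and continuous dependence by the Bona--Smith method; the blow-up alternative follows, as you say, from the fact that the existence time depends only on the norm of the datum. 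The one genuine divergence is the regularization device: the paper uses parabolic regularization (following Theorem 1.6 of \cite{Pa}), whereas you regularize the data and frequency-truncate the nonlinearities, solving the truncated system by Picard iteration. That route is viable, but as written it has a gap: Propositions \ref{import} and \ref{en_est:diff} are stated for solutions of \eqref{sbo} (or its rescaled version), not of the truncated system, and the modified-energy estimates rest on exact cancellations (for instance ${\rm II}_{1,2}+{\rm III}_{2,2}=0$ and ${\rm III}_{1,1}+{\rm I}_{2}=0$) which are perturbed once projections $P_{\le n}$ are inserted into the nonlinear terms; you must re-run the energy computation for the truncated system and control the resulting commutators with $P_{\le n}$ uniformly in $n$ — standard, but precisely the bookkeeping the paper's parabolic regularization avoids, since the added dissipative term has a favorable sign and leaves the cancellations untouched. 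A smaller point: the Picard step should be carried out in a fixed $H^\sigma$ (with $n$-dependent Lipschitz constants), smoothness of $(u_n,v_n)$ then coming from the smooth data, rather than ``on a ball of $H^\infty$'', which is only a Fr\'echet space. With these repairs your argument closes, and the Bona--Smith step with mollified data, interpolation, and absorption of the remainder $f_s(t)$ is exactly how the paper (via \cite{lps}) handles continuous dependence.
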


\begin{proof}
First observe that by using the rescaled version \eqref{sbo_rescaled} of the system, we can assume by choosing $\lambda$ small enough that the norm initial datum $\|(u_0, v_0)\|_{H^{s+\frac12}\times H^s}$ is small.

Then, the proof of the existence and the uniqueness is a combination of the parabolic regularization method with the energy estimates obtained in Propositions \ref{import} and \ref{en_est:diff} and taking into account Remarks \ref{rem:en_est} and \ref{rem:en_est_diff}. We refer to the proof of Theorem 1.6 in \cite{Pa} for the details in a similar setting. 

The continuous dependence and persistence is obtained by applying the Bona-Smith approximation method. We refer to \cite{bona}, \cite{Iorio}, \cite{LiPo} and \cite{Pa} for the details.
\end{proof}

%%%%%%%%%%%%%%%%%%%%%%%%%%%%%%%%%%%%%
\subsection{A priori estimates}

Let $(u_0,v_0) \in H^{\infty}(\R)\times H^{\infty}(\R)$. From the above result there exists a solution $(u,v) \in C\big([0,T^ *): H^{\infty}(\R)\times H^{\infty}(\R) \big)$, where $T^*$ is the maximal time of existence satisfying $T^* \ge  T(\|(u_0, v_0)\|_{H^{\frac52}\times H^2})$. Moreover, we have the blow-up alternative 
\begin{equation} \label{blow-up_alt}
\lim_{t \nearrow T^*} \|(u(t),v(t))\|_{H^{\frac52} \times H^2}=+\infty  \quad \text{if} \quad T^* <+\infty .
\end{equation}

Let $\frac54<s \le \frac32$. By using a bootstrap argument, we prove that the solution $(u,v)$ satisfies a suitable \textit{a priori} estimate on a positive time interval depending only on the $H^{s+\frac12} \times H^s$ norm of the initial datum.

\begin{lem} \label{apriori} 
Let $\;\frac54<s \le \frac32$. There exist positive constant $\delta_s$, $K_s$ and $A_s$ such that if 
$$ 
\|(u_0, v_0)\|_{H^{s+\frac12}\times H^s} \le \delta_s,
$$
then $T^* >T_s:=(A_s(1+\|(u_0, v_0)\|_{H^{s+\frac12}\times H^s}))^{-2}$, 
\begin{equation} \label{apriori.1}
\|\left(u,v\right)\|_{L^{\infty}_{T_s}(H^{s+\frac{1}{2}}\times H^{s})_{x}} \le 8 \|(u_0, v_0)\|_{H^{s+\frac12}\times H^s} \quad \text{and} \quad \|\partial_xv\|_{L^1_{T_s}L^{\infty}_x} \le K_s .
\end{equation}
\end{lem}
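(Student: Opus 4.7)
The strategy is a bootstrap argument combining the modified-energy estimate of Proposition \ref{import} with the refined Strichartz bound of Lemma \ref{est:dxv}. Since $(u_0,v_0)\in H^\infty$, Theorem \ref{lwpsmoothsol} produces a smooth maximal solution on $[0,T^*)$. Writing $M_0:=\|(u_0,v_0)\|_{H^{s+1/2}\times H^s}\le\delta_s$, and for constants $K_s$ and $A_s$ to be chosen later, I would define
\[
\tau := \sup\bigl\{\,t\in[0,T^*)\ \colon\ \|(u,v)\|_{L^\infty_t(H^{s+\frac12}\times H^s)}\le 8M_0 \ \text{and}\ \|\partial_xv\|_{L^1_tL^\infty_x}\le K_s\,\bigr\}.
\]
Continuity of the flow in $H^{s+1/2}\times H^s$ together with the vanishing of $\|\partial_xv\|_{L^1_tL^\infty_x}$ at $t=0$ gives $\tau>0$. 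The goal is to prove $\tau\ge T_s$ by strictly improving both bootstrap inequalities on $[0,\tau]$ under the contradiction hypothesis $\tau<T_s$.

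For the Strichartz half, estimate \eqref{est:dxv.1} together with the bootstrap bound yields
\[
\|\partial_xv\|_{L^1_\tau L^\infty_x}\le \kappa_{2,s}\tau\bigl(8M_0+128M_0^2\bigr)\le C_s\tau(1+M_0)M_0,
\]
and since $\tau<T_s=(A_s(1+M_0))^{-2}$, choosing $A_s$ sufficiently large (and in particular so that $T_s\le 1$) forces the right-hand side to be $\le K_s/2$, a strict improvement. For the energy half, I would first shrink $\delta_s$ so that $8\delta_s\le 1/c_s$, ensuring the smallness hypothesis of Proposition \ref{import} holds throughout $[0,\tau]$. Estimate \eqref{m-energy3} combined with $\tau\le 1$ and the bootstrap bound on $\|\partial_xv\|_{L^1_\tau L^\infty_x}$ gives
\[
\|(u,v)\|_{L^\infty_\tau(H^{s+\frac12}\times H^s)}\le 2\exp\bigl(\kappa_{1,s}(1+K_s)\bigr)M_0 .
\]
Fixing $K_s$ small so that $2\exp(\kappa_{1,s}(1+K_s))\le 4$ delivers the strict improvement $\|(u,v)\|_{L^\infty_\tau(H^{s+1/2}\times H^s)}\le 4M_0$. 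The two strict improvements contradict the maximality of $\tau$, so $\tau\ge T_s$.

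To upgrade $\tau\ge T_s$, a statement inside $[0,T^*)$, to the full claim $T^*>T_s$, I would argue by contradiction assuming $T^*\le T_s$. On $[0,T^*)$ the bootstrap bounds hold, so $\|\partial_xv\|_{L^1_{T^*}L^\infty_x}$ is finite. Running a classical energy estimate at the higher regularity level $H^{5/2}\times H^2$ (legitimate since the data is smooth), closed via Gronwall using this $L^1_tL^\infty_x$ control of $\partial_xv$ together with the commutator bounds from Section~2, shows that $\|(u,v)\|_{L^\infty_{T^*}(H^{5/2}\times H^2)}$ remains finite, contradicting the blow-up alternative of Theorem \ref{lwpsmoothsol}. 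Together with the bootstrap output this yields \eqref{apriori.1}.

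The main obstacle is calibrating the coupled triple $(\delta_s,K_s,A_s)$ in the correct order: $\delta_s$ is dictated by the coercivity hypothesis of Proposition \ref{import}; once $\delta_s$ is fixed, $K_s$ must be small enough that the Gronwall factor $2\exp(\kappa_{1,s}(1+K_s))$ strictly beats the factor $8$ encoded in the bootstrap; then, with $\delta_s$ and $K_s$ both fixed, $A_s$ must be taken large enough that the refined Strichartz bound strictly beats $K_s$. This cascade of dependencies is what forces the window $T_s=(A_s(1+M_0))^{-2}$ to shrink as $M_0$ increases.
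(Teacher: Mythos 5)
Your overall scheme is the paper's: bootstrap the $H^{s+\frac12}\times H^s$ bound, feed Lemma \ref{est:dxv} into the modified-energy estimate \eqref{m-energy3}, and use the $H^{\frac52}\times H^2$ level together with the blow-up alternative \eqref{blow-up_alt}. However, the calibration of the Gronwall factor fails as written. The exponent in \eqref{m-energy3} is $\kappa_{1,s}\big(T+\|\partial_xv\|_{L^1_TL^\infty_x}\big)$; bounding $T$ merely by $1$ gives the factor $2\exp\big(\kappa_{1,s}(1+K_s)\big)\ge 2e^{\kappa_{1,s}}$, and no choice of small $K_s$ makes this $\le 4$ unless $\kappa_{1,s}\le\log 2$, which is a property of the fixed constant of Proposition \ref{import} you cannot assume. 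The smallness of the time window must be used in this term too: on $[0,\tau]$ with $\tau<T_s\le A_s^{-2}$ the exponent is $\kappa_{1,s}(A_s^{-2}+K_s)$, so $A_s$ (and $K_s$) must be chosen to dominate the energy constants as well as the Strichartz one; this is precisely the paper's choice $A_s=2^{6}(\log 2)^{-1}(1+\kappa_{1,s}+\kappa_{1,2})(1+\kappa_{2,s})$. Your closing ``cascade'' is therefore off: $A_s$ cannot be determined by the Strichartz improvement alone.

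Two further points. First, the step ``the improved bounds contradict the maximality of $\tau$'' is only valid when $\tau<T^*$; in the case $\tau=T^*<T_s$ there is no continuation beyond $\tau$ to contradict, and your later argument cannot repair this because its premise (the bootstrap bounds hold on all of $[0,T^*)$) is borrowed from the step in question. The paper avoids this circularity by putting the higher-regularity bound inside the bootstrap: under the bootstrap hypothesis, \eqref{m-energy3} applied at level $s=2$ bounds $\|(u,v)\|_{L^\infty_T(H^{5/2}\times H^2)}$ for every $T<\tilde{T}_s$, whence $\tilde{T}_s<T^*$ by \eqref{blow-up_alt}, and only then does the continuity argument produce the contradiction. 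Second, that higher-level bound cannot come from a ``classical'' energy estimate closed with the commutator bounds of Section~2: the coupling terms $2\,\mathrm{Im}\int u\,D_x^{s+1}\overline{u}\,D_x^{s}v\,dx$ and $\int D_x^{s}v\,\partial_xD_x^{s}(|u|^2)\,dx$ lose half a derivative and are not controlled by $\|\partial_xv\|_{L^\infty}$ times the energy — this loss is the very reason the modified energy was introduced. You must invoke Proposition \ref{import} at level $s=2$, which is also how the constant $\kappa_{1,2}$ enters the choice of $A_s$.
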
 

\begin{proof} Let $\frac54<s \le \frac32$. We set $\delta_s:= 2^{-6}\min \{ c_s^{-1},\widetilde{c}_s^{-1} \}$, 
where $c_s$ and $\widetilde{c}_s$ are respectively defined in Propositions \ref{import} and \ref{en_est:diff}. 
Assume that $\|(u_0, v_0)\|_{H^{s+\frac12}\times H^s} \le \delta_s$. Then, we define 
\begin{equation*}
\tilde{T}_s:= \sup \left\{ T \in (0,T^*) :  \|\left(u,v\right)\|_{L^{\infty}_{T}(H^{s+\frac{1}{2}}\times H^{s})_{x}} \le 8 \|(u_0, v_0)\|_{H^{s+\frac12}\times H^s}  \right\} .
\end{equation*}
Note that the above set is nonempty since $(u,v) \in C\big([0,T^ *): H^{\infty}(\R)\times H^{\infty}(\R) \big)$, so that $\tilde{T}_s$ is well-defined. We argue by contradiction and assume that \[0<\tilde{T}_s<(A_s(1+\|(u_0, v_0)\|_{H^{s+\frac12}\times H^s}))^{-2} \le 1\] for $A_s=2^6 (\log 2)^{-1}(1+\kappa_{1,s}+\kappa_{1,2})(1+\kappa_{2,s})$, where $\kappa_{1,s}$ and $\kappa_{2,s}$ are respectively defined in Proposition \ref{import} and Lemma \ref{est:dxv}.

Let $0<T<\tilde{T}_s$. We have from the definition of $\tilde{T}_s$ that 
\[ \|\left(u,v\right)\|_{L^{\infty}_{T}(H^{s+\frac{1}{2}}\times H^{s})_{x}} \le 8 \|(u_0, v_0)\|_{H^{s+\frac12}\times H^s} \le \frac18 c_s^{-1} .\] 
Then, estimate \eqref{est:dxv.1} yields
\begin{equation*}
\|\partial_xv\|_{L^1_TL^{\infty}_x}  \le 8\kappa_{2,s} T \left(1+16 \|(u_0, v_0)\|_{H^{s+\frac12}\times H^s} \right)  \|(u_0, v_0)\|_{H^{s+\frac12}\times H^s} \le \frac{\log 2}{8(1+\kappa_{1,s}+\kappa_{1,2})} .
\end{equation*}
Hence, we deduce by using the energy estimate \eqref{m-energy3} at the level $s=2$ that 
\begin{equation*}
\|\left(u,v\right)\|_{L^{\infty}_{T}(H^{\frac{5}{2}}\times H^{2})_{x}} \le 4  \|(u_0, v_0)\|_{H^{\frac52}\times H^2}, \quad \forall 0<T<\tilde{T}_s .
 \end{equation*}
 This implies in view of the blow-up alternative \eqref{blow-up_alt} that $\tilde{T}_s<T^*$. 
 
 Now, applying again the energy estimate \eqref{m-energy3} at the level $s$ implies that 
 $$
 \|\left(u,v\right)\|_{L^{\infty}_{\tilde{T}_s}(H^{s+\frac{1}{2}}\times H^{s})_{x}} \le 4  \|(u_0, v_0)\|_{H^{s+\frac12}\times H^s}\,
 $$
 so that by continuity, there exists some $T^{\dagger}_s$ satisfying $\tilde{T}_s<T^{\dagger}_s<T^*$ such that 
 $$
 \|\left(u,v\right)\|_{L^{\infty}_{T^{\dagger}_s}(H^{s+\frac{1}{2}}\times H^{s})_{x}} \le 6  \|(u_0, v_0)\|_{H^{s+\frac12}\times H^s}.
 $$
This contradicts the definition of $\tilde{T}_s$. 
 
 Therefore, $\tilde{T}_s \ge T_s:=(A_s(1+\|(u_0, v_0)\|_{H^{s+\frac12}\times H^s}))^{-2}$ and we argue as above to get the bound for $\|\partial_xv\|_{L^1_{T_s}L^{\infty}_x}$. This concludes the proof of the lemma.
\end{proof}

%%%%%%%%%%%%%%%%%%%%%%%%%%%%%%%%%%%%%
\subsection{Uniqueness and $H^{\frac12}\times L^2$-Lipschitz bounds for the flow} \label{section_uniq}
Let $s>\frac54$ and let $(u_1,v_1)$ and $(u_2,v_2)$ be two solutions of \eqref{sbo} in the class \eqref{clas1}-\eqref{clas2} corresponding to initial data $(u_1^0,v_1^0)$ and $(u_2^0,v_2^0)$. By using the change of variables \eqref{sbo_rescaled}, we can always assume that $\|\left(u_i^0,v_i^0\right)\|_{H^{s+\frac{1}{2}}_{x}\times H^{s}_{x}}\le \frac1{2\widetilde{c}_s}$, for $i=1,2$, where $\widetilde{c}_s$ is the positive constant given in Proposition \ref{en_est:diff}. Then, by possibly restricting the time interval, we can assume that \eqref{m-energy:diff.0} holds on $[0,T]$. We define the positive number 
\begin{equation*} 
K:= \max \left\{ \|\partial_xv_1\|_{L^1_TL^{\infty}_x}, \|\partial_xv_2\|_{L^1_TL^{\infty}_x} \right\} .
\end{equation*}

Therefore, we deduce from \eqref{m-energy:diff.1}, \eqref{m-energy:diff.2} and the Gronwall inequality that
\begin{equation} \label{Lip_bound_flow}
 \|(u_1-u_2,v_1-v_2)\|_{L^{\infty}_T(H^{\frac12}_{x}\times L^2_{x})} \le 2\,e^{c(K+1)T}\|(u_1^0-u_2^0,v_1^0-v_2^0)\|_{H^{\frac12}\times L^2} .
\end{equation}
Estimate \eqref{Lip_bound_flow} provides the uniqueness result in Theorem \ref{teo1}. by choosing $(u_1^0,v_1^0)=(u_2^0,v_2^0)=(u_0,v_0)$. 

%%%%%%%%%%%%%%%%%%%%%%%%%%%%%%%%%%%%%%%

\subsection{Existence, persistence and continuous dependence}

Let $\frac54<s \le \frac32$ and let $(u_0,v_0) \in H^{s+\frac12}(\mathbb R) \times H^s(\mathbb R)$. By using the change of variables \eqref{sbo_rescaled}, we can always assume that \newline $\|(u_0, v_0)\|_{H^{s+\frac12}\times H^s} \le \delta_s$, where $\delta_s$ is the positive constant given by Lemma \ref{apriori}.

We regularize the initial datum as follows. Let $\chi$ be a smooth cutoff function satisfying
\begin{equation}\label{chi}
\chi \in C_0^{\infty}(\mathbb R), \quad 0 \le \chi \le 1, \quad
\chi_{|_{[-1,1]}}=1 \quad \mbox{and} \quad  \mbox{supp}(\chi)
\subset [-2,2].
\end{equation}
Then we define
\begin{equation*}
(u_{0,n},v_{0,n})= (P_{\le n}u_0,P_{\le n}v_0)=\left(\left(\chi({|\xi|}/{n}) \widehat{u}_0(\xi)\right)^{\vee},\left(\chi({|\xi|}/{n}) \widehat{v}_0(\xi)\right)^{\vee}  \right)\, ,
\end{equation*}
for any $n \in \mathbb N$, $n \ge 1$.

Then, the following estimates are well-known (see for example Lemma 5.4 in \cite{lps}).
\begin{lem} \label{BSreg}\hskip10pt

\begin{itemize}
\item[(i)]
Let $\sigma \ge 0$ and $n \ge 1$. Then,
\begin{equation} \label{BSreg.1}
\|u_{0,n}\|_{H^{s+\frac12+\sigma}} \lesssim n^{\sigma} \|u_0\|_{H^{s+\frac12}} \quad \text{and} \quad  \|v_{0,n}\|_{H^{s+\sigma}} \lesssim n^{\sigma} \|v_0\|_{H^s} .
\end{equation}

\item[(ii)]
Let $0 \le \sigma \le s$ and $m\ge n\ge 1$. Then, 
\begin{equation} \label{BSreg.4}
\|u_{0,n}-u_{0,m}\|_{H^{s+\frac12-\sigma}} \underset{n \to +\infty}{=}o(n^{-\sigma}) \quad \text{and} \quad \|v_{0,n}-v_{0,m}\|_{H^{s-\sigma}} \underset{n \to +\infty}{=}o(n^{-\sigma}) .
\end{equation}
\end{itemize}
\end{lem}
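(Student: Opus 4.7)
The plan is to prove both parts directly using the Plancherel identity and the frequency localization of the cutoff $\chi(|\xi|/n)$, exploiting the fact that $\chi$ is supported in $[-2,2]$ and identically $1$ on $[-1,1]$.

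For part (i), I would write
\begin{equation*}
\|u_{0,n}\|_{H^{s+\frac12+\sigma}}^2 = \int_{\mathbb R} (1+|\xi|^2)^{s+\frac12+\sigma} |\chi(|\xi|/n)|^2 |\widehat{u}_0(\xi)|^2\, d\xi ,
\end{equation*}
and observe that on the support of $\chi(\cdot/n)$ we have $|\xi| \le 2n$, so $(1+|\xi|^2)^{\sigma} \lesssim n^{2\sigma}$ (absorbing the $\sigma=0$ case and adjusting constants for $n \ge 1$). Pulling this factor out of the integral and using $|\chi| \le 1$ yields the claimed bound $\|u_{0,n}\|_{H^{s+\frac12+\sigma}} \lesssim n^{\sigma}\|u_0\|_{H^{s+\frac12}}$. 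The estimate for $v_{0,n}$ is identical.

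For part (ii), the crucial observation is that the Fourier multiplier of $u_{0,n}-u_{0,m}$, namely $\chi(|\xi|/n) - \chi(|\xi|/m)$, vanishes on the region $|\xi| \le n$, since $\chi(|\xi|/n) = \chi(|\xi|/m) = 1$ there (as $m \ge n$). Writing
\begin{equation*}
\|u_{0,n}-u_{0,m}\|_{H^{s+\frac12-\sigma}}^2 = \int_{|\xi| \ge n} (1+|\xi|^2)^{s+\frac12-\sigma} \bigl|\chi(|\xi|/n) - \chi(|\xi|/m)\bigr|^2 |\widehat{u}_0(\xi)|^2\, d\xi ,
\end{equation*}
I would use $(1+|\xi|^2)^{-\sigma} \lesssim n^{-2\sigma}$ on the region $|\xi| \ge n$ (valid for $\sigma \ge 0$), together with the uniform bound $|\chi(|\xi|/n) - \chi(|\xi|/m)| \le 2$, to obtain
\begin{equation*}
n^{2\sigma}\|u_{0,n}-u_{0,m}\|_{H^{s+\frac12-\sigma}}^2 \lesssim \int_{|\xi| \ge n} (1+|\xi|^2)^{s+\frac12} |\widehat{u}_0(\xi)|^2\, d\xi .
\end{equation*}
Since $u_0 \in H^{s+\frac12}(\mathbb R)$, the right-hand side is the tail of a convergent integral and tends to $0$ as $n \to \infty$ by the dominated convergence theorem, uniformly in $m \ge n$. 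This gives $\|u_{0,n}-u_{0,m}\|_{H^{s+\frac12-\sigma}} = o(n^{-\sigma})$, and the estimate for $v_{0,n}-v_{0,m}$ follows in the same way from $v_0 \in H^s(\mathbb R)$.

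No step is genuinely hard; the only subtlety is that the bound in (ii) must be uniform in $m$, but this is automatic because after bounding $|\chi(\cdot/n)-\chi(\cdot/m)| \le 2$ the dependence on $m$ disappears entirely and the remaining tail integral depends only on $n$ and $u_0$ (respectively $v_0$).
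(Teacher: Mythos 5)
Your proof is correct: the paper does not prove this lemma but simply cites it as well-known (Lemma 5.4 in \cite{lps}), and your Plancherel argument --- bounding $(1+|\xi|^2)^{\sigma}$ by $n^{2\sigma}$ on $\mathrm{supp}\,\chi(\cdot/n)$ for (i), and using that $\chi(|\xi|/n)-\chi(|\xi|/m)$ vanishes for $|\xi|\le n$ together with the tail of the convergent integral for (ii) --- is exactly the standard proof of that cited result. The uniformity in $m\ge n$ is handled correctly, so nothing is missing.
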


Now, for each $n \in \mathbb N$, $n \ge 1$, we consider the solution $(u_n,v_n)$ to \eqref{sbo} emanating from $(u_{0,n},v_{0,n})$ defined on their maximal time interval $[0,T^{\star}_n)$. From Lemmas \ref{apriori} and \ref{BSreg} (i) with $\sigma=0$, there exists a positive time 
\begin{equation} \label{defT}
T:=(A_s(1+\|(u_0, v_0)\|_{H^{s+\frac12}\times H^s}))^{-2} \, ,
\end{equation} 
(where $A_s$ is a positive constant), independent of $n$, such that $(u_n,v_n) \in C([0,T] : H^{\infty}(\mathbb R)\times H^{\infty}(\mathbb R))$ is defined on the time interval $[0,T]$ and satisfies 
\begin{equation} \label{existence.1}
\|\left(u_n,v_n\right)\|_{L^{\infty}_{T}(H^{s+\frac{1}{2}}\times H^{s})_{x}} \le 8 \|(u_0, v_0)\|_{H^{s+\frac12}\times H^s}\end{equation}
and 
\begin{equation} \label{existence.2}
K:=\sup_{n \ge 1}\big\{ \|\partial_x v_n\|_{L^1_TL^{\infty}_{x}} \big\} <+\infty\, .
\end{equation}

Let $m \ge n \ge 1$. We set $w_{n,m} := u_n-u_m$ and $z_{n,m}:= v_n-v_m$. Then, $(w_{n,m},z_{n,m})$ satisfies \eqref{sbo:diff} with initial datum $(w_{n,m}(\cdot,0),z_{n,m}(\cdot,0))=(u_{0,n}-u_{0,m},v_{0,n}-v_{0,m})$. Then, by using  \eqref{Lip_bound_flow} and \eqref{BSreg.4} with $\sigma=s$, we deduce that
\begin{equation} \label{existence.3}
 \|(w_{n,m},z_{n,m})\|_{L^{\infty}_T(H^{\frac12}_{x}\times L^2_{x})} \le 2\,e^{c(K+1)T}\|(u_{0,n}-u_{0,m},v_{0,n}-v_{0,m})\|_{H^{\frac12}\times L^2}  \underset{n \to +\infty}{=} o(n^{-s}) ,
\end{equation}
which implies interpolating with \eqref{existence.1} that 
\begin{equation*} 
\begin{split}
\|(w_{n,m},z_{n,m})\|_{L^{\infty}_T(H^{\sigma+\frac12}_{x}\times H^{\sigma}_{x})} &\le \|(w_{n,m},z_{n,m})\|_{L^{\infty}_T(H^{s+\frac12}_{x}\times H^{s}_{x})}^{\frac{\sigma}s} \|(w_{n,m},z_{n,m})\|_{L^{\infty}_T(H^{\frac12}_{x}\times L^2_{x})}^{1-\frac{\sigma}s} \\ &\underset{n \to +\infty}{=}o(n^{-(s-\sigma)}) \, ,
\end{split}
\end{equation*}
for all $0 \le \sigma <s$.

Therefore, we deduce  that $\{(u_n,v_n) \}$ is a Cauchy sequence in $L^{\infty}([0,T] : H^{\sigma+\frac12}(\mathbb R)\times H^{\sigma}(\mathbb R))$, for any $0 \le \sigma<s$. Hence, it is not difficult to verify passing to the limit as $n \to +\infty$ that $(u,v) =\lim_{n \to +\infty}(u_n,v_n)$ is a weak solution to \eqref{sbo} in the class $C([0,T] : H^{\sigma+\frac12}(\mathbb R)\times H^{\sigma}(\mathbb R)) $ and satisfying $\|\partial_xv\|_{L^1_TL^{\infty}_x} \le K$, for any $0 \le \sigma<s$. 

Finally, the proof that $u$ belongs to the class \eqref{clas1} and of the continuous dependence of the flow follows from the classical Bona-Smith argument \cite{bona}. The proof relies on the energy estimate \eqref{m-energy:diff.3} and we refer the readers to \cite{lps} for more details in this setting. 

\vspace{0.5cm}
\noindent{\bf Acknowledgements.}  The authors are grateful to the Mathematics Department of Bergen University and to the Instituto de Matem\'atica Pura e Applicada where part of this work was done. F.L. was partially supported by CNPq grant 305791/2018-4 and FAPERJ grant E-26/202.638/2019 and MathAmSud EEQUADD II. D.P. was supported by a Trond Mohn Foundation grant. They authors would like to thank the anonymous referees for their careful proofreading of the manuscript.

\medskip

\end{document}